\newtheorem{fact}{Fact}[section]   
\newtheorem{theorem}[fact]{Theorem}
\newtheorem{proposition}[fact]{Proposition}
\newtheorem{lemma}[fact]{Lemma}
\newtheorem{corollary}[fact]{Corollary}
\newtheorem{remark}[fact]{Remark}
\newtheorem{example}[fact]{Example}
\newtheorem{assumption}[fact]{Assumption}
\def\Sym{{\rm Sym}}
\def\Ell{\mathcal E\mkern-3.7mu\ell\mkern-2mu\ell}
\def\Ellt{\widetilde{\Ell}}
\def\O{\mathscr O}
\def\Hom{{\rm Hom}}
\def\T{{\bf T}}
\def\t{{\mathfrak t}}
\def\gr{Gro\-then\-dieck }
\def\charr{characteristic }
\def\EE{{\mathcal E}}
\def\vt{\vartheta}
\def\f{f}
\def\om{w}
\def\w{{\underline{w}}}
\def\mfw{\w}
\DeclareMathOperator\EH{Ell}
\DeclareMathOperator\TTT{\mathcal T}
\DeclareMathOperator\C{\mathbb C}
\DeclareMathOperator\PP{\mathbb P}
\DeclareMathOperator\Q{{\mathbb Q}}
\DeclareMathOperator{\GL}{GL}
\DeclareMathOperator{\SL}{SL}
\DeclareMathOperator\Z{{\mathbb Z}}
\DeclareMathOperator\W{\mathbf w}
\DeclareMathOperator\pt{pt}
\DeclareMathOperator{\Ext}{\mathrm{Ext}}
\DeclareMathOperator{\codim}{\mathrm{codim}}
\DeclareMathOperator{\scrExt}{\mathscr{E}\mathrm{xt}}
\def\rhoL{\bar \rho^L}
\def\w{{\underline{w}}}
\def\v{{\underline{v}}}
\title{Elliptic classes of Schubert varieties} 
\author{Shrawan Kumar}
\address{Department of Mathematics, University of North Carolina at Chapel Hill, USA}
\email{shrawan@email.unc.edu}
\author{Rich\'ard Rim\'anyi}
\address{Department of Mathematics, University of North Carolina at Chapel Hill, USA}
\email{rimanyi@email.unc.edu}
\author{Andrzej Weber}
\address{Institute of Mathematics, University of Warsaw, Poland}
\email{aweber@mimuw.edu.pl}
\begin{document}
\begin{abstract} 
We introduce new notions in elliptic Schubert calculus: the (twisted) Borisov-Libgober classes of Schubert varieties in general homogeneous spaces $G/P$. While these classes do not depend on any choice, they depend on a set of new variables. For the definition of our classes we calculate multiplicities of some divisors in Schubert varieties, which were only known for full flag varieties before. Our approach leads to a simple recursions for the elliptic classes. Comparing this recursion with R-matrix recursions of the so-called elliptic weight functions of Rimanyi-Tarasov-Varchenko we prove that weight functions represent elliptic classes of Schubert varieties.

%
%
\end{abstract}

\maketitle

\section{Introduction}

Schubert calculus is usually considered in ordinary cohomology or in $K$-theory. Generalized cohomology theories correspond to formal group laws. Under this correspondence ordinary cohomology and $K$-theory correspond to the one-dimensional algebraic groups $\C$ and $\C^*$ respectively. There is another one-dimensional complex algebraic group, the elliptic curve $E=\C^*/q^{\Z}$, ($|q|<1$ fixed). The corresponding cohomology theory is called {\it elliptic}. In this paper we study the thus obtained (equivariant) {\it elliptic Schubert calculus}. 

A key step in any Schubert calculus is assigning a \charr class to a Schubert variety. Traditionally this \charr class is the {\em fundamental class} notion of the given cohomology theory. However, it is known that in elliptic cohomology the notion of fundamental class is {\it not} well defined \cite{BE}, or in other words, the notion depends on choices. There are important works (e.g. \cite{GR13,LZ} and references therein) on elliptic fundamental classes based on making some natural choices---the choice can be geometric (a resolution) or algebraic (a basis in a Hecke algebra). In this paper we are suggesting a notion which does not depend on choices. Our class is not  the elliptic fundamental class (as just discussed, it does not exist); we regard our class as an analogue of the cohomological {\em Chern-Schwartz-MacPherson (CSM) class}, and the K-theoretic {\em motivic Chern (MC) class}. In fact, certain limits of our elliptic class recovers the CSM and the MC classes. 

The CSM and MC \charr classes are one-parameter deformations of the fundamental classes in their respective cohomology theories. The parameter is usually denoted by\footnote{or by $\hbar$ in physics literature, also sometimes by $y$ in $K$-theory---to match the classical notion of Hirzebruch $\chi_y$--genus} $h$. At ``$h=\infty$'' and $h=1$ the CSM and MC classes specialize to the fundamental class of the theory. Our elliptic class also depends on the extra $h$ parameter. However, the elliptic analogue  has a pole at $h=1$, which we regard as another incarnation of the fact that the notion of fundamental class should not exist in elliptic cohomology. 

Our project--- definition of the $h$-deformed elliptic class of a Schubert variety---has been carried out for full flag varieties $G/B$ in \cite{RW}. Along the way, it was necessary to introduce further new variables\footnote{these extra variables are probably related with the ``dynamical variables'', a.k.a. ``K\"ahler variables'' of mathematical physics literature} $\mu_i$.  The purpose of this paper is to carry out the same task for general homogeneous spaces $G/P$. Compared to the case of $G/B$ some unexpected difficulties need to be handled. The setup of elliptic characteristic classes has a deeply geometric component which is missing from the setup of both CSM classes (in $H_T^*$) and MC classes (in $K_T$). Namely, only special kinds of singularities are allowed (the multiplicities of some divisors of the resolution are constrained) and the pull-back of a Cartier divisor (involving the canonical divisor and the boundary divisor) need to be understood. This piece of geometry was not known for general $G/P$ before. 

Hence, in the first part of the paper we study the divisors and their pullbacks on Schubert varieties of $G/P$. In the second part, using these results, we define the elliptic classes of Schubert varieties in $G/P$ and discuss their defining recursions. In the third part, for $G=\GL_n$, we prove that the thus obtained elliptic class can be represented by an explicit function called  elliptic weight function of \cite{RTV}. 

\medskip
Let us describe some recent developments on the frontiers of geometry and representation theory, which was a guidance of our work, and which may put our construction in context. In a theory initiated by Okounkov and his coauthors \cite{MO, O, AO}  a new \charr class is defined under the name of {\em stable envelope (class)} (see also works of Rim\'anyi-Tarasov-Varchenko \cite{RTV15a, RTV15, RTV}). Stable envelopes have cohomological, K-theoretic, and elliptic versions. Roughly speaking this class is defined as follows: an identification is set up between the Bethe algebra of certain quantum integrable systems and the regular representations of certain cohomology, $K$-theory, elliptic cohomology algebras. On the physics side of this identification there are two natural bases: the spin (or coordinate-) basis and the Bethe (or eigen-) basis. The identification matches the Bethe basis with the fundamental classes of torus fixed points on the geometric side. The geometric classes matching the spin basis are given the name of stable envelope classes. The essence of results in \cite{RV, FR, FRW1, AMSS1, AMSS2} is that, in Schubert calculus settings, the cohomological stable envelopes are the CSM classes of Schubert cells, and the K-theoretic stable envelopes are the MC classes of Schubert cells. Hence, it is natural to predict that there is an elliptic generalization of the CSM and MC class. Moreover, that this notion in Schubert calculus matches the elliptic stable envelopes of \cite{AO, RTV}. Exactly this prediction is fulfilled by the results of \cite{RW} and the present paper. Let us emphasize, that although we used the above mentioned works of Okounkov and others as guidance, our work does not rely on them. 

\medskip

\noindent{\bf Acknowledgements.} S.K. is supported by the NSF grant DMS 1802328, R.R. is supported by a Simons foundation grant. A.W. is supported by the research project of the Polish National Research Center 2016/23/G/ST1/04282 (Beethoven~2, German-Polish joint project).

\section{Notation}
Throughout the paper we will use the following notation.

\begin{itemize}
\item $G$ is any semisimple connected, simply-connected complex linear group with Borel subgroup $B$ and maximal torus $\T$. Its Lie algebra is denoted by $\t=\Hom(\C^*,\T)\otimes\C$. The dual of the Lie algebra $\t^*=\Hom(\T,\C^*)\otimes \C$ contains the lattice of integral weights $\t^*_{\Z}=\Hom(\T,\C^*)$, which are identified with characters.  
We will also need the fractional weights $\t^*_{\Q}=\Hom(\T,\C^*)\otimes \Q$.

\item $P$ is a standard parabolic subgroup with the Levi subgroup $L$ containing $\T$, see \cite[Part II,\S1.8]{J03}.

\item $W_P$ is the Weyl group of $P$, i.e., the Weyl group of $L$.

\item $W^{P}$ denotes the smallest length coset representatives in $W/W_{P}$.

\item We denote the dualizing sheaf of a Cohen-Macaulay Scheme $Y$ by $\omega_{Y}$.

\item $X^{P}_{w}\subset G/P$ is the Schubert variety $\overline{BwP/P}$.

\item $\scrExt$ denotes the sheaf $\Ext$.

\item $\rho\in\t^*_{\Z}$ is the (standard) half sum of positive roots of $G$.

\item $\rho^{L}\in\t^*_{\Z}$ is half the sum of positive roots of $L$.

\item $\mathbb{C}_{\lambda}$ denotes the one dimensional representation of $\T$ as well as the trivial line bundle on $X_{P}:=G/P$ with the $\T$-equivariant structure given by $\mathbb{C}_{\lambda}$.

\item For any character $\lambda$ of $P$, $\mathscr{L}^P(\lambda)$ denotes the line bundle over $X_P$:
$$
G \times^P\, {\mathbb{C}_{-\lambda}}\to X_{P}.
$$

\item 
\begin{tabbing}
Define  $\rhoL$ by $\rhoL(\alpha^{\vee}_{i})$ \== 1, if $\alpha_{i}$ is a simple root of $L$\\[3pt]
                             \>= 0, otherwise.
\end{tabbing}
\end{itemize}
Observe that $\rho - \rhoL$ is a character of $P$ and so is $2\rho - 2\rho^L$. We often identify a character $\lambda$ by its derivative $\dot{\lambda}$. 

\section{The canonical divisor}

The canonical sheaf is the key object of our consideration. Suppose $X$ is a Cohen-Macaulay scheme, then the dualizing complex is concentrated in the gradation: $\dim X$ and coincides up to a shift with the dualizing sheaf $\omega_X$ defined in \cite[\S III.7]{Har}.
Let $j:V\to X$ be the inclusion of an open subset whose complement is of codimension at least 2. By \cite[Lemma 2.7]{KN97} or \cite[\S5]{Ko13} the canonical sheaf is determined by its restriction to $V$:
\begin{equation}\label{pullpush}\omega_X=j_*j^{-1}\omega_X\,.\end{equation}
It is easy to see  that the dualizing sheaf of the homogeneous space is given by
\begin{equation}\label{ambientcanonical}\omega_{X_{P}}=\mathscr{L}^P(-2\rho+2\rho^{L})\,,\end{equation} see, e.g., ~\cite[Part II,\S4.2]{J03}. Moreover,  $X^{P}_{w}$ is a Cohen-Macaulay variety
(\cite [Corollary 3.4.4]{BK}).  Recall that, for any Cohen-Macaulay subvariety $Y$ of a smooth variety $X$,
\begin{equation}\label{subcanonical}
\omega_Y\simeq \scrExt^{\codim Y}_{\O_X}(\O_Y,  \O_X)\otimes \omega_X.\end{equation}
In particular, 
\begin{equation}\label{can_as_ext}
\omega_{X^{P}_{w}}=\scrExt_{\O_{X_{P}}}^{\codim X^{P}_{w}}\left(\O_{X^{P}_{w}},\O_{X_{P}}\right)\otimes \omega_{X_{P}}.
\end{equation}
 
We identify the fixed points of $X^P$ under the action of $\T$ with the set of shortest representatives $W^P\subset W$. For $v,w\in (X^P)^\T$ we  write $v\to w$ if $v<w$ and $\dim X^P_v=\dim X^P_w-1$. Let $\mathring{X}^{P}_{v}\subset X^P_v$ denote the Schubert cell.
For $w\in W^{P}$, let $i_{w}:\{pt\}\to X_{P}$ be 
the map sending the point to the fixed point $w$. Then, as $\T$-equivariant line bundles, $i^{*}_{w}\mathscr{L}^P(\lambda)=\mathbb{C}_{-w\lambda}$, for any character $\lambda:P\to \mathbb{C}^{*}$.
Let $$\xi^{w}:=\mathbb{C}_{\rho-w\bar{\rho}^L}\otimes \omega_{X^{P}_{w}}\otimes \mathscr{L}^P(\rho-\rhoL).
$$

\begin{lemma}\label{lem1}
Restricted to
$\mathring{X}^{P}_{w}$, we have a $B$-equivariant isomorphism:
$$
(\xi^{w})_{|\mathring{X}^{P}_{w}}\simeq (\O_{X^{P}_{w}})_{|\mathring{X}^{P}_{w}}.
$$
\end{lemma}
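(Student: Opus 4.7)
The plan is to exploit that $\mathring{X}^P_w = BwP/P$ is a single $B$-orbit, so $\mathring{X}^P_w \cong B/B_w$ where $B_w := B \cap wPw^{-1}$ is the $B$-stabilizer of the base-point $wP/P$. On such a homogeneous space, a $B$-equivariant line bundle is $B$-equivariantly trivial iff the one-dimensional $B_w$-representation on the fiber over the base-point is trivial. Now $B_w$ is a closed, connected, solvable subgroup containing $\T$ (with unipotent radical generated by the root subgroups $U_\alpha$ for $\alpha \in R^+$ satisfying $w^{-1}\alpha \in R^+ \cup (-R^+_L)$). Since every character of a unipotent group is trivial (as $\C^*$ contains no unipotent elements), the $B_w$-character is detected by its restriction to $\T$. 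Thus it suffices to show that the total $\T$-weight of $\xi^w$ at the fixed point $wP/P$ vanishes.

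Next I would compute the three $\T$-weights at $wP/P$ separately. \emph{(i)} The trivial equivariant bundle $\mathbb{C}_{\rho - w\rhoL}$ has $\T$-weight $\rho - w\rhoL$ everywhere. \emph{(ii)} On the smooth locus $\mathring{X}^P_w$, the $\scrExt$-definition \eqref{can_as_ext} of $\omega_{X^P_w}$ reduces to the ordinary canonical bundle of $\mathring{X}^P_w$, and its weight at the fixed point is the negative of the sum of weights of $T_w \mathring{X}^P_w = \mathfrak{b}/(\mathfrak{b} \cap \mathrm{Ad}(w)\mathfrak{p})$. A brief root-combinatorial check identifies these tangent weights as $\{\alpha \in R^+ : w^{-1}\alpha < 0\}$: the potential extra case $w^{-1}\alpha \in R^-_L$ is excluded by $w \in W^P$, since then $\alpha = w(w^{-1}\alpha) \in wR^-_L \subset R^-$, contradicting $\alpha > 0$. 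Hence by the standard identity $\rho - w\rho = \sum_{\alpha \in R^+,\,w^{-1}\alpha<0}\alpha$ the canonical bundle weight at $w$ is $w\rho - \rho$. \emph{(iii)} The associated bundle $\mathscr{L}^P(\rho - \rhoL) = G \times^P \C_{-(\rho - \rhoL)}$ has fiber $\C_{-w(\rho - \rhoL)}$ at $wP/P$, contributing $\T$-weight $-w(\rho - \rhoL)$.

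Summing these,
\[
(\rho - w\rhoL) + (w\rho - \rho) + (-w\rho + w\rhoL) = 0,
\]
so the $\T$-weight, and therefore the entire $B_w$-character, of the fiber of $\xi^w$ at $wP/P$ is trivial, yielding the desired $B$-equivariant trivialization.

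The main technical point is the first one: reducing $B$-equivariant triviality on the whole cell to a single $\T$-weight computation at $wP/P$. This requires knowing that $B_w$ is connected and that characters of a connected solvable group are detected on its maximal torus; also one must verify that the $\scrExt$-defined $\omega_{X^P_w}$ carries the expected $B$-equivariant structure and agrees with the usual canonical bundle on the smooth cell. Once these reductions are made, the remaining step is a short root-combinatorial identity, and the clean cancellation above is precisely what the twist $\mathbb{C}_{\rho - w\rhoL}$ is engineered to produce.
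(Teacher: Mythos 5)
Your proposal is correct and takes essentially the same route as the paper: reduce $B$-equivariant triviality on the cell to triviality of the $\T$-weight at the fixed point $wP/P$, then compute the three contributing weights and observe the cancellation. The paper computes the $\omega_{X^P_w}$-contribution by pulling back the $\scrExt$-term and tensoring with $i^*_w\omega_{X_P}$, arriving at the same $w\rho-\rho$ you get more directly from the tangent weights of the cell (both computations rest on the same root-theoretic fact that $T_w(X^P_w)$ has weights $R^+\cap wR^-$); you also spell out the reduction step (connectedness and solvability of $B_w=B\cap wPw^{-1}$) more explicitly than the paper's terse ``both sheaves are trivial of rank one.''
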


\begin{proof}
Since $\mathring{X}^P_w$ is smooth isomorphic to an affine space and both sheaves are trivial of rank one,
it is enough  to show that $i^{*}_{w}\xi^{w}$ is trivial as a $\T$-module.
This follows since by \eqref{ambientcanonical} and \eqref{subcanonical}
\begin{align*}
\xi^{w}&=\mathbb{C}_{\rho-w\bar{\rho}^L}\otimes \left(\scrExt^{\codim X^{P}_{w}}_{\O_{X_{P}}}(\O_{X^{P}_{w}},\O_{X_{P}})\otimes \mathscr{L}^P(-2\rho+2\rho^{L})\right)\otimes
\mathscr{L}^P(\rho-\rhoL)\\
&=\C_{\rho-w\bar{\rho}^L}\otimes \scrExt^{\codim X^{P}_{w}}_{\O_{X_{P}}}(\O_{X^{P}_{w}},\O_{X_{P}})\otimes \mathscr{L}^P(-\rho+2\rho^L-\bar{\rho}^L)
\end{align*}
 and 
\begin{align*}
i^{*}_{w}\left(\scrExt^{\codim X^{P}_{w}}_{\O_{X^{P}}}\left(\O_{X^{P}_{w}},\O_{X_{P}}\right)\right)&\simeq \det \left(\frac{T_{w}(X_{P})}{T_{w}(X^{P}_{w})}\right)\\
&\simeq \C_{-(\rho+w\rho-2w\rho^{L})}.
\end{align*}
To prove the last equality we proceed as follows: Let $R^+$ (resp. $R^-$) be the set of positive (resp.~negative) roots of $\mathfrak g$, $R^-_P$ the set of negative roots of the Levi subgroup of $P$. Then $$T_w(X^P_w)=T_w(BwP/P)=T_w(w(w^{-1}Bw\cap B^-)P/P) =\bigoplus_{\beta\in R^+\cap w R^-} \mathfrak g_\beta\,,$$
$$T_w(X_P)=\bigoplus_{\beta\in w(R^-\setminus R^-_P)} \mathfrak g_\beta\,.$$
Thus,
\begin{equation}\label{tangentdet}\det \left(\frac{T_{w}(X_{P})}{T_{w}(X^{P}_{w})}\right)\simeq 
\C_{-w(2\rho-2\rho^L)-(\rho-w\rho)}=\C_{-(\rho+w\rho-2w\rho^{L})}
\end{equation}
by \cite[Cor.~1.3.22(3)]{Ku02}.
The conclusion of the lemma follows since the weight of $i_w^*\xi_w$ is equal to
$$(\rho-w\rhoL)-(\rho+w\rho-2w\rho^L)-w(-\rho+2\rho^L-\bar{\rho}^L)=0\,.$$
\end{proof}
Let $V^{P}_{w}:= \mathring{X}_{w}^{P}\cup \bigcup\limits_{v\to w}\mathring{X}^{P}_{v}$.
Then, $V^{P}_{w}$ is a smooth open subset of $X^{P}_{w}$. 
The restriction $\xi^{w}|_{V^{P}_{w}}$ is an invertible $B$-equivariant $\O_{V^P_w}$-module. Hence, by Lemma \ref{lem1},  
$$
\xi^{w}_{|_{V^{P}_{w}}}\simeq \O_{X^{P}_{w}}\big(-\sum\limits_{v\to w}m^P_{w,v}X^{P}_{v}\big)_{|_{V^{P}_{w}}},\quad \text{for some~ } m^P_{w,v}\in \mathbb{Z}.
$$

\begin{lemma}\label{lem2} The coefficients $m^P_{w,v}$ of the restriction of $\xi^w$ to $V^{P}_{w}$ are given by the formula:
 $$m^P_{w,v}:=1-\langle w\rhoL,\beta^{\vee}\rangle\,,$$ where $\beta$ is the  positive root such that $v=s_{\beta}w$.
\end{lemma}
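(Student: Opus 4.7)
The plan is to detect $m^P_{w,v}$ by restricting both sheaves to the $\T$-fixed point $v \in \mathring{X}^P_v \subset V^P_w$. Among the divisors $X^P_{v'}$ with $v' \to w$, only $X^P_v$ passes through $v$: any two such $v'$ and $v$ have equal length, so $v \in X^P_{v'}$ would force $v = v'$ in the Bruhat order. Consequently
\[ i_v^*\, \O\bigl(-\sum_{v'\to w} m^P_{w,v'}\, X^P_{v'}\bigr) = \mathbb{C}_{m^P_{w,v}\,\gamma}, \]
where $\gamma$ is the $\T$-weight of $\O(-X^P_v)|_v$, i.e., the conormal weight of the smooth divisor $X^P_v \subset V^P_w$ at $v$.

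Next I identify $\gamma = \beta$. The $\T$-stable $\PP^1$ in $X^P_w$ connecting $v$ and $w = s_\beta v$ is the closure of the orbit $U_{-\beta}\cdot vP/P$ in $G/P$; the condition $v \to w$ guarantees $v^{-1}\beta \in R^+ \setminus R^+_P$, so the orbit is non-trivial. Its tangent line at $v$ carries $\T$-weight $-\beta$, a negative root. Since $T_v X^P_v = \bigoplus_{\alpha \in R^+ \cap vR^-} \mathfrak g_\alpha$ has only positive-root weights, a dimension count (using that $X^P_w$ is smooth at $v$ because $v \in V^P_w$) yields $T_v X^P_w = T_v X^P_v \oplus \mathfrak g_{-\beta}$. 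So $N_{X^P_v/X^P_w,v}$ has weight $-\beta$ and $\gamma = \beta$.

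Finally I compute the weight of $i_v^*\xi^w$. From $i_v^*\mathscr{L}^P(\lambda) = \mathbb{C}_{-v\lambda}$ and the conormal exact sequence one gets $i_v^*\omega_{X^P_w} = i_v^*\omega_{X^P_v}\otimes\mathbb{C}_\beta$. Evaluating Lemma \ref{lem1} at its own fixed point $v$ (where $i_v^*\xi^v$ is trivial) gives $i_v^*\omega_{X^P_v} = \mathbb{C}_{v\rho-\rho}$. Multiplying the three tensor factors of $\xi^w$ at $v$ then yields
\[
i_v^*\xi^w = \mathbb{C}_{(\rho-w\rhoL)+(v\rho-\rho+\beta)+(-v\rho+v\rhoL)} = \mathbb{C}_{\beta+v\rhoL - w\rhoL}.
\]
Since $w = s_\beta v$, the reflection identity $s_\beta(v\rhoL) = v\rhoL - \langle v\rhoL,\beta^\vee\rangle\,\beta$ gives $v\rhoL - w\rhoL = \langle v\rhoL,\beta^\vee\rangle\,\beta = -\langle w\rhoL,\beta^\vee\rangle\,\beta$, so $i_v^*\xi^w = \mathbb{C}_{(1-\langle w\rhoL,\beta^\vee\rangle)\beta}$. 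Matching coefficients of $\beta$ gives the claimed formula. The main obstacle is the identification of the connecting $\T$-stable curve and its weight in $G/P$ rather than $G/B$; everything else is routine bookkeeping of $\T$-weights.
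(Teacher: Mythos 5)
Your proof is correct and uses the same core strategy as the paper's: restrict both $\xi^w$ and $\O_{X^P_w}\bigl(-\sum m^P_{w,v'}X^P_{v'}\bigr)$ to the $\T$-fixed point $v$ and equate $\T$-weights. The route through the weight of $i_v^*\omega_{X^P_w}$ differs, though. The paper expands $\omega_{X^P_w}$ via the $\scrExt$-formula and evaluates $\det\bigl(T_v(X_P)/T_v(X^P_w)\bigr)$ by factoring through $T_v(X^P_v)$, reusing its general tangent-space determinant computation. You instead invoke adjunction for the smooth divisor $X^P_v\subset V^P_w$ and recycle Lemma~\ref{lem1} at the index $v$ to read off $i_v^*\omega_{X^P_v}=\C_{v\rho-\rho}$, giving $i_v^*\omega_{X^P_w}=\C_{v\rho-\rho+\beta}$ with lighter bookkeeping. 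Both routes rest on the fact that the normal weight of $X^P_v$ inside $X^P_w$ at $v$ is $-\beta$; the paper uses this silently inside its determinant identities, whereas you justify it by exhibiting the $\T$-stable curve $\overline{U_{-\beta}\cdot vP/P}$ and checking that $v^{-1}\beta\in R^+\setminus R^+_P$, so the orbit is a nontrivial $\PP^1$ joining $v$ to $w$. That extra step makes your argument slightly more self-contained than the paper's. There is no gap.
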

Here the bracket $\langle-,-\rangle$ denotes the pairing between weights and coweights.

\begin{proof}
Take  $v\in W^{P}$ with $v\to w$. Then,
\begin{align}\label{eq0}
i^{*}_{v}\left(\scrExt^{\codim X^{p}_{w}}_{\O_{X_{P}}}\left(\O_{X^{P}_{w}},\O_{X^{P}}\right)\right) &\simeq \det \left(\frac{T_{v}(X_{P})}{T_{v}(X_{w}^{P})}\right)\notag\\[3pt]
&\simeq \det \left(\frac{T_{v}(X_{P})}{T_{v}(X^{P}_{v})}\right)\otimes \det \left(\frac{T_{v}(X^{P}_{w})}{T_{v}(X^{P}_{v})}\right)^{*}\notag\\[3pt]
&\simeq\C_{-(\rho+v\rho-2v\rho^{L}) +\beta},\quad\text{where}\,\,v=s_{\beta}w ,
\end{align}
by \eqref{tangentdet}. Thus, by \eqref{can_as_ext}, \eqref{ambientcanonical} and \eqref{eq0},
\begin{align}\label{eq1}
i^{*}_{v}\xi^{w} &=\C_{\rho-w\bar{\rho}^L}\otimes i^*_v\left(\scrExt_{\O_{X_{P}}}^{\codim X^{P}_{w}}\left(\O_{X^{P}_{w}},\O_{X_{P}}\right)\otimes \omega_{X_{P}}\right)\otimes i^*_v\mathscr{L}^P(\rho-\rhoL)\notag\\
&=\C_{\rho-w\bar{\rho}^L}\otimes i^*_v\left(\scrExt_{\O_{X_{P}}}^{\codim X^{P}_{w}}\left(\O_{X^{P}_{w}},\O_{X_{P}}\right)\otimes \mathscr{L}^P(-2\rho+2\rho^L)\right)\otimes i^*_v\mathscr{L}^P(\rho-\rhoL)\notag\\
&\simeq \C_{\rho-w\rhoL}\otimes \C_{-(\rho+v\rho-2v\rho^{L}) +\beta}\otimes \C_{v(2\rho-2\rho^L)}\otimes \C_{-v(\rho-\rhoL)}\notag\\
&\simeq\C_{{\beta(1-\langle w\rhoL,\beta^{\vee}\rangle)}},
\end{align}
as the following calculation shows.
\begin{align*}
{\rho-w\rhoL}{-(\rho+v\rho-2v\rho^{L}) +\beta+v(2\rho-2\rho^L)}-v(\rho-\rhoL)
&=\;{v\rhoL+\beta-w\rhoL}\\
&=\;{s_{\beta}w\rhoL+\beta-w\rhoL}\\
&=\;{- \langle w\rhoL,\beta^{\vee}\rangle\beta+\beta}\\
&=\;{\beta(1-\langle w\rhoL,\beta^{\vee}\rangle)}.
\end{align*}
Also,
\begin{align}\label{eq2}
i^{*}_{v}\left(\O_{X^{P}_{w}}\left(-\sum\limits_{u\to w}m^P_{w,u} X^{P}_{u}\right)\right) &= \det \left(\frac{T_{v}(X^{P}_{w})}{T_{v}(X^{P}_{v})}\right)^{\otimes -m^P_{w,v}}\notag\\[3pt]
&= \mathbb{C}_{-\beta}^{\otimes -m^P_{w, v}}\notag\\
&=\mathbb{C}_{m^P_{w, v}\beta}.
\end{align}
Equating \eqref{eq1} and \eqref{eq2}, we obtain the lemma.
\end{proof}

\begin{theorem}\footnote{The proof  is parallel to that of  \cite[$\S$10]{Ku17}}\label{thm3}
For any $w\in W$, we have a $B$-equivariant isomorphism:
$$
{\xi}^{w}\simeq \O_{X^{P}_{w}}\left(-\sum\limits_{v\to w}m^P_{w,v}X^{P}_{v}\right),
$$
where $m^P_{w,v}$ is as in Lemma \ref{lem2}.
Thus, the dualizing  sheaf $\omega_{X^{P}_{w}}$ of $X^{P}_{w}$ is $\T$-equivariantly isomorphic to
$$
\mathbb{C}_{-\rho+w\rhoL}\otimes \O_{X^{P}_{w}}\left(-\sum\limits_{v\to w}m^P_{w,v}X^{P}_{v}\right)\otimes \mathscr{L}^P(\rhoL-\rho).
$$
Moreover, the multiplicity  $m^P_{w,v}$ is a positive integer.
\end{theorem}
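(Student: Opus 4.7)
The plan is to deduce the full isomorphism from Lemma~\ref{lem2} by a reflexive extension argument, derive the formula for $\omega_{X^P_w}$ by a direct manipulation, and establish positivity of the multiplicities via a short Coxeter-theoretic observation.

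For the extension, I would first note that both sides of the asserted isomorphism are rank-one reflexive $B$-equivariant $\O_{X^P_w}$-modules. The left-hand side $\xi^w$ is a twist of $\omega_{X^P_w}$, which is reflexive because $X^P_w$ is Cohen--Macaulay. The right-hand side is the divisorial sheaf associated to a $B$-stable Weil divisor on the normal variety $X^P_w$ (normality of Schubert varieties being a standard fact, cf.~\cite{BK}), hence reflexive as well. By construction $V^P_w$ contains the smooth top cell together with every codimension-one Schubert subvariety, so $X^P_w \setminus V^P_w$ has codimension at least $2$. The extension principle \eqref{pullpush} then upgrades the isomorphism of Lemma~\ref{lem2} on $V^P_w$ to a unique isomorphism on $X^P_w$. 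Since $V^P_w$ is $B$-stable and $j_*$ is functorial for the $B$-equivariant structure, the extension is automatically $B$-equivariant. The claimed formula for $\omega_{X^P_w}$ itself follows at once by solving the definition $\xi^w = \C_{\rho-w\rhoL}\otimes \omega_{X^P_w}\otimes \mathscr{L}^P(\rho-\rhoL)$ for $\omega_{X^P_w}$.

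For positivity, suppose $v\to w$ with $v = s_\beta w$ and $\beta$ a positive root. The relation $s_\beta w < w$ gives, by the standard Bruhat-order criterion, $w^{-1}\beta \in R^-$. On the other hand, $\rhoL$ is by definition a sum of the fundamental weights dual to the simple roots of $L$, hence dominant; consequently $\langle \rhoL, \gamma^\vee \rangle \geq 0$ for every positive coroot $\gamma^\vee$. Applying this to $\gamma := -w^{-1}\beta \in R^+$ yields
\[
\langle w\rhoL, \beta^\vee\rangle \;=\; \langle \rhoL, w^{-1}\beta^\vee\rangle \;\leq\; 0,
\]
so $m^P_{w,v} = 1 - \langle w\rhoL,\beta^\vee\rangle$ is a positive integer. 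The only mildly delicate point in the argument is that the reflexive extension genuinely transports the $B$-equivariant structure; this is automatic from $B$-stability of $V^P_w$ and the functoriality of $j_*$, so once Lemma~\ref{lem2} is in hand no substantial obstacle remains.
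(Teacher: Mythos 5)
Your proof is correct and follows essentially the same route as the paper: extend the isomorphism of Lemma~\ref{lem2} from $V^P_w$ to $X^P_w$ across a codimension-$\ge 2$ complement, and deduce positivity of $m^P_{w,v}$ from the facts that $w^{-1}\beta\in R^-$ (Bruhat criterion) and $\rhoL$ is dominant. The only stylistic difference is that you invoke the general reflexive-extension principle for divisorial sheaves on normal varieties (so the citation of \eqref{pullpush} is slightly imprecise, as that identity is stated for dualizing sheaves), whereas the paper proves $\O_{X^P_w}(-D)\simeq j_*j^{-1}\O_{X^P_w}(-D)$ directly via a short exact-sequence diagram and reserves \eqref{pullpush} for the Cohen--Macaulay module $\xi^w$; the substance is the same.
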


\begin{proof} We first  prove the positivity of  $m^P_{w,v}$ :
We have $v<w$, $\beta>0$ and $s_\beta w=v$.
Hence, $v^{-1}<w^{-1}$ and $w^{-1}s_\beta=v^{-1}$.
By \cite[Lemma 1.3.13]{Ku02} the root $w^{-1}\beta$ is negative. Hence, $$m^P_{w,v}=1-\langle w\rhoL,\beta^{\vee}\rangle=1-\langle\rhoL,w^{-1}\beta^{\vee}\rangle\geq1\,.$$

Let $j:V^{P}_{w}\hookrightarrow X^{P}_{w}$ be the inclusion.
Consider the following commutative diagram with exact rows, where $D:= \sum\limits_{v\to w}m^P_{w,v}X^{P}_{v}$ is the divisor with 
$m^P_{w,v}$ as in Lemma \ref{lem2}.
\[
\xymatrix{
0\ar[r] & \O_{X^{P}_{w}}(-D)\ar[d]\ar[r] & \O_{X^{P}_{w}}\ar[r]\ar[d]^{
\simeq} & \O_{D}\ar[r]\ar[d] & 0\\
0\ar[r] & j_{\ast}j^{-1}(\O_{X^{P}_{w}}(-D))\ar[r] & j_{*}j^{-1}(\O_{X^{P}_{w}})\ar[r] & j_{*}j^{-1}\O_{D}. & 
}
\]
The middle vertical arrow is an isomorphism since $X^{P}_{w}$ is normal and $X^{P}_{w}\backslash V^{P}_{w}$ is of $\codim \geq 2$ in $X^{P}_{w}$. Moreover, the right vertical map is injective since the closure of $\bar{D}\cap V^{P}_{w}$ coincides with $\bar{D}$, where $\bar{D}$ denotes the support of $D$. Hence, the left vertical map
\begin{equation}\label{eqnnew0}
\O_{X^{p}_{w}}(-D)\to j_{*}j^{-1}(\O_{X^{P}_{w}}(-D))
\end{equation}
is an isomorphism.

On the other hand, since $\xi^{w}$ is a Cohen-Macaulay $\O_{X^{P}_{w}}$-module, by \eqref{pullpush},  we have
$$
\xi^{w}\,\,\displaystyle{\mathop{\simeq}\limits^{\phi_1}}\,\,j_{*}j^{-1}(\xi^{w})\,\,\displaystyle{\mathop{\simeq}\limits^{\phi_2}}\,\,j_{*}j^{-1}\O_{X^{P}_{w}}(-D)\,\,\displaystyle{\mathop{\simeq}\limits^{\phi_3}}\,\,\O_{X^{P}_{w}}(-D),
$$
where the isomorphism $\phi_2$ follows from Lemma \ref{lem2} and $\phi_3$ is an isomorphism by \eqref{eqnnew0}.
This proves the theorem.
\end{proof}

\begin{corollary} \label{coro2.5} Let $K_{X_\om^P}$ denote a divisor corresponding to the dualizing sheaf $\omega_{X_\om^P}$. Then, 
$$K_{X_\om^P}+ \sum\limits_{v\to w}
m^P_{\om,v} X_v^P$$
is a $\T$-equivariant Cartier divisor representing
$\C_{-\rho+\om\rhoL}
\otimes \mathscr{L}^P(\rhoL-\rho).$ \qed
\end{corollary}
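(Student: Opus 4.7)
The plan is to derive the corollary directly from Theorem~\ref{thm3}; at this stage the geometric work is already done, and what remains is only to translate the sheaf isomorphism into the language of divisors. Let $D := \sum_{v\to \om} m^P_{\om,v}\, X_v^P$ denote the boundary divisor appearing in Theorem~\ref{thm3}. On the normal Cohen--Macaulay variety $X_\om^P$, both $\omega_{X_\om^P}$ and $\O_{X_\om^P}(D)$ are rank-one reflexive sheaves associated respectively to the Weil divisor classes $K_{X_\om^P}$ and $D$.

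The first step is to tensor both sides of the conclusion of Theorem~\ref{thm3},
$$\omega_{X_\om^P} \simeq \C_{-\rho + \om\rhoL} \otimes \O_{X_\om^P}(-D) \otimes \mathscr{L}^P(\rhoL - \rho),$$
with $\O_{X_\om^P}(D)$ in order to cancel the $\O_{X_\om^P}(-D)$ factor. This produces a $\T$-equivariant isomorphism
$$\omega_{X_\om^P} \otimes \O_{X_\om^P}(D) \;\simeq\; \C_{-\rho + \om\rhoL} \otimes \mathscr{L}^P(\rhoL - \rho).$$
The cancellation of $\O(-D)\otimes\O(D)$ with $\O_{X_\om^P}$ is unambiguous on the smooth open $V_\om^P$ and extends to $X_\om^P$ by the $j_\ast j^{-1}$-formula \eqref{pullpush}, since all three sheaves involved are reflexive on a normal variety (exactly as was used in the proof of Theorem~\ref{thm3}).

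The second step is to read off the Cartier property. The right-hand side is manifestly a $\T$-equivariant line bundle, so the reflexive rank-one sheaf on the left is invertible. By definition this means the Weil divisor $K_{X_\om^P} + D$ is Cartier, and its associated $\T$-equivariant line bundle is $\C_{-\rho + \om\rhoL} \otimes \mathscr{L}^P(\rhoL - \rho)$, which is exactly the statement of the corollary.

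The only conceptually delicate point is that individually neither $K_{X_\om^P}$ nor $D$ need be Cartier, since $X_\om^P$ is typically singular (not even Gorenstein in general), yet the very specific $\Z$-linear combination $K_{X_\om^P} + \sum m^P_{\om,v} X_v^P$ with the multiplicities computed in Lemma~\ref{lem2} is Cartier: the ``non-Cartier'' contributions of the two summands cancel exactly. I expect no further obstacle because this cancellation is precisely the substance already secured by Theorem~\ref{thm3}; the corollary is essentially a rephrasing.
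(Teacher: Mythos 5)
Your argument is correct and is essentially the elaboration of what the paper leaves implicit: the corollary carries a \qed with no written proof because it is just Theorem~\ref{thm3} read as an identity of divisor classes, with the Cartier line bundles $\C_{-\rho+\om\rhoL}$ and $\mathscr{L}^P(\rhoL-\rho)$ moved to one side. Your care about reflexive hulls is sound (one can phrase the cancellation most cleanly by restricting to $V_\om^P$, where all sheaves involved are invertible, and then extending uniquely across the codimension-two complement by normality), but this is exactly the mechanism already at work in Theorem~\ref{thm3}, so no new idea is needed.
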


\section{Chevalley formula}

Let $\mfw=s_{j_1}\ldots s_{j_\ell}$ 
 be a reduced decomposition of $w\in W$ ($W$ being the Weyl group of $G$ and $s_j$ are simple reflections) and let $Z_{\mfw}$ be the corresponding Bott-Samelson-Demazure-Hansen resolution$$f_{\mfw}:Z_{\mfw}\to X_{w}^B\subset G/B\,,$$
---which is often called standard resolution, or  Bott-Samelson resolution, or for short BSDH resolution, see, e.g., \cite[$\S$2.2.1]{BK}.

\begin{proposition}\label{prop3.1}
For any integral weight $\lambda\in\t^*_{\Z}$ (not necessarily dominant),  we have
$$
\f^{*}_{\mfw}(\mathscr{L}^B(\lambda))\simeq \O_{Z_{\mfw}}\left(\sum\limits^{\ell}_{i=1}\langle \lambda,\gamma^{\vee}_{i}\rangle \partial_{i}Z_{\mfw}\right),
$$
where
$
\gamma_{i}:= s_{j_\ell}s_{j_{\ell-1}}\ldots s_{j_{i+1}}\alpha_{j_i}$, $\alpha_j$ is the simple root corresponding to the simple reflection $s_j$
and
$$ \partial_{i}Z_{\mfw} :=Z_{s_{j_1}s_{j_2}\ldots \widehat{s_{j_i}}\cdots s_{j_\ell}}.$$
\end{proposition}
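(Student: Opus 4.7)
The plan is a straightforward induction on $\ell$, exploiting the $\PP^1$-bundle structure of $Z_\mfw$. For $\ell=0$ everything is trivial. For the inductive step, set $\mfw' = s_{j_1}\cdots s_{j_{\ell-1}}$ and view $Z_\mfw$ as the $\PP^1$-bundle $\pi\colon Z_\mfw\to Z_{\mfw'}$ obtained as the associated bundle $\widetilde Z_{\mfw'}\times^B (P_{j_\ell}/B)$ from the principal $B$-bundle $\widetilde Z_{\mfw'}:= P_{j_1}\times^B\cdots\times^B P_{j_{\ell-1}}$ over $Z_{\mfw'}$. The natural section $\sigma[p_1,\ldots,p_{\ell-1}]=[p_1,\ldots,p_{\ell-1},1]$ has image $\partial_\ell Z_\mfw$, and $\pi^{-1}(\partial_i Z_{\mfw'})=\partial_i Z_\mfw$ for $i<\ell$. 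Consequently $\mathrm{Pic}(Z_\mfw)=\pi^*\mathrm{Pic}(Z_{\mfw'})\oplus \Z\cdot[\O(\partial_\ell Z_\mfw)]$.

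Since $\gamma_i^{(\mfw)}=s_{j_\ell}\gamma_i^{(\mfw')}$ for $i<\ell$ and $\gamma_\ell^{(\mfw)}=\alpha_{j_\ell}$, one has $\langle\lambda,(\gamma_i^{(\mfw)})^\vee\rangle=\langle s_{j_\ell}\lambda,(\gamma_i^{(\mfw')})^\vee\rangle$. Hence the target formula, combined with the induction hypothesis applied to $\mfw'$ and the weight $s_{j_\ell}\lambda$, reduces the claim to the single isomorphism
\[
f_\mfw^*\mathscr{L}^B(\lambda)\;\simeq\;\pi^* f_{\mfw'}^*\mathscr{L}^B(s_{j_\ell}\lambda)\otimes \O\bigl(\langle\lambda,\alpha_{j_\ell}^\vee\rangle\,\partial_\ell Z_\mfw\bigr).
\]
By the Picard decomposition above, it suffices to verify this on a fiber of $\pi$ and along the section $\sigma$. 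A fiber $P_{j_\ell}/B\cong\PP^1$ is mapped isomorphically by $f_\mfw$ onto a translate of the Schubert line $P_{j_\ell}/B\subset G/B$, on which $\mathscr{L}^B(\lambda)$ has degree $\langle\lambda,\alpha_{j_\ell}^\vee\rangle$; the right hand side has the same fiber degree since $\pi^*(-)$ is fiberwise trivial and $\partial_\ell Z_\mfw$ meets each fiber transversally at the $B$-fixed point. Along $\sigma$, the left hand side becomes $f_{\mfw'}^*\mathscr{L}^B(\lambda)$ because $f_\mfw\circ\sigma=f_{\mfw'}$, and on the right hand side I would use the normal-bundle identification $\sigma^*\O(\partial_\ell Z_\mfw)=N_{\sigma/Z_\mfw}\cong \widetilde Z_{\mfw'}\times^B \C_{-\alpha_{j_\ell}}\cong f_{\mfw'}^*\mathscr{L}^B(\alpha_{j_\ell})$ together with the reflection identity $\lambda - s_{j_\ell}\lambda = \langle\lambda,\alpha_{j_\ell}^\vee\rangle\alpha_{j_\ell}$.

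The main technical input is the normal-bundle computation for the section $\sigma$, which rests on the identification $T_{eB}(P_{j_\ell}/B)=\mathfrak g_{-\alpha_{j_\ell}}$ as a $B$-representation and on the translation between associated line bundles on $\widetilde Z_{\mfw'}$ and pullbacks of the form $f_{\mfw'}^*\mathscr{L}^B(-)$. Once that is in hand, the remainder of the argument is bookkeeping with the $\PP^1$-bundle structure.
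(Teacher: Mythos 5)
Your proof is correct and follows the same inductive skeleton as the paper's: induction on $\ell$, using the relations $\gamma_i^{(\mfw)}=s_{j_\ell}\gamma_i^{(\mfw')}$ (for $i<\ell$) and $\gamma_\ell^{(\mfw)}=\alpha_{j_\ell}$ to reduce everything to the one-step pullback formula
$f_\mfw^*\mathscr{L}^B(\lambda)\simeq\pi^*f_{\mfw'}^*\mathscr{L}^B(s_{j_\ell}\lambda)\otimes\O(\langle\lambda,\alpha_{j_\ell}^\vee\rangle\,\partial_\ell Z_\mfw)$. The difference is that the paper treats this one-step formula as a black box, citing Kempf \cite[\S2, Lemma 3]{Kem}, whereas you prove it from scratch: you use the Picard-group decomposition $\mathrm{Pic}(Z_\mfw)=\pi^*\mathrm{Pic}(Z_{\mfw'})\oplus\Z[\O(\partial_\ell Z_\mfw)]$ of the $\PP^1$-bundle, match fiber degrees, and then restrict to the section, identifying $\sigma^*\O(\partial_\ell Z_\mfw)$ with the normal bundle $\widetilde Z_{\mfw'}\times^B\C_{-\alpha_{j_\ell}}\simeq f_{\mfw'}^*\mathscr{L}^B(\alpha_{j_\ell})$ and applying $\lambda-s_{j_\ell}\lambda=\langle\lambda,\alpha_{j_\ell}^\vee\rangle\alpha_{j_\ell}$. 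That is a perfectly sound and more self-contained route; the cost is that you carry the extra baggage of the principal $B$-bundle $\widetilde Z_{\mfw'}$ and the associated-bundle description, whereas the paper's citation compresses precisely this computation. If you wanted to tighten the write-up, the only place deserving a line of justification is the Picard decomposition itself (it requires that the $\PP^1$-bundle admits a section, which you have, and that pullback followed by $\sigma^*$ is the identity on $\mathrm{Pic}(Z_{\mfw'})$); as stated it is fine.
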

\begin{proof}
Consider the diagram
\[
\xymatrix{
Z_{\mfw}\ar[d]_{\pi_{\mfw}}\ar[r]\ar@/^{2pc}/[rr]^-{f_{\mfw}} & X^B_{w[\ell]}\times_{G/P_{\alpha_{j_\ell}}}G/B\ar[r]\ar[d] & G/B\ar[d]\\
Z_{\mfw[\ell]}\ar[r]_{f_{\mfw[\ell]}} & X^B_{w[\ell]}\ar[r] & G/P_{\alpha_{j_\ell}},
}
\]
where $Z_{\mfw[\ell]}$ corresponds to the word $s_{j_1}\ldots s_{j_{\ell-1}}$ and $X^B_{w[\ell]}:= X^B_{s_{j_1}\ldots s_{j_{\ell-1}}}$.
By \cite[\S2, Lemma 3]{Kem},
\begin{align*}
f^{*}_{\mfw}(\mathscr{L}^B(\lambda)) &\simeq \pi^{*}_{\mfw}(f_{\mfw[\ell]}^*(\mathscr{L}^B(s_{j_\ell}\lambda)))\otimes \O_{Z_{\mfw}}\left(\langle \lambda, \alpha^{\vee}_{j_\ell}\rangle Z_{\mfw[\ell]}\right)\\
&=  \O_{Z_{\mfw}}\left(\sum\limits^{\ell - 1}_{i=1}\langle s_{j_\ell}\lambda, s_{j_{\ell-1}}\ldots s_{j_{i+1}}\alpha^{\vee}_{j_i}\rangle(\partial_{i}Z_{\mfw})\right)+
  \O_{Z_{\mfw}}\left(\langle \lambda,\alpha^{\vee}_{j_\ell}\rangle(\partial_{\ell}Z_{\mfw})\right),\,\, \text{by induction on~ } l(w)\\
&=\O_{Z_{\mfw}}\left(\sum\limits^{\ell}_{i=1}\langle \lambda,s_{j_\ell}s_{j_{\ell -1}}\ldots s_{j_{i+1}}\alpha^{\vee}_{j_i}\rangle (\partial_{i}Z_{\mfw}) \right).
\end{align*}
This proves the proposition.
\end{proof}

As a corollary of the above proposition and Corollary \ref{coro2.5}, we get the following.
\begin{corollary}\label{coro2}
$$
\bar{f}^{*}_{\mfw}\left(\omega_{X^{P}_{w}}\otimes \O_{X^{P}_{w}}\left(\sum\limits_{v\to w}m^P_{w,v}X^{P}_{v}\right)\right)\simeq \omega_{Z_{\mfw}}\otimes \O_{Z_{\mfw}}\left(\sum\limits^{\ell}_{i=1}m^P_{\mfw,i}(\partial_{i}Z_{\mfw})\right)\otimes \mathbb{C}_{w\bar{\rho}^L},
$$
where $\pi^P:X_w^B \to X_w^P$ is the projection,  $\bar{f}_{\mfw} := \pi^P\circ f_{\mfw}$ and 
\begin{align*}
m^P_{\mfw,i} : &= 1-\langle w\rhoL,\beta^{\vee}_{i}\rangle,\,\,\,\text{and $\beta_i:= s_{j_1}\ldots s_{j_{i-1}}\alpha_{j_i}$}\\
                 &= 1+\langle \rhoL,\gamma^{\vee}_{i}\rangle  \geq 1.
\end{align*}
\end{corollary}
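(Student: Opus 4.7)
The plan is to combine Corollary \ref{coro2.5} (which identifies $\omega_{X^P_w}\otimes \O_{X^P_w}(\sum_{v\to w} m^P_{w,v} X^P_v)$ with the explicit $\T$-equivariant line bundle $\C_{-\rho+w\rhoL}\otimes \mathscr{L}^P(\rhoL-\rho)$) with Proposition \ref{prop3.1} (which writes the pullback to $Z_{\mfw}$ of any $\mathscr{L}^B(\lambda)$ as an explicit integer combination of the boundary divisors $\partial_i Z_{\mfw}$).

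First I would rewrite the left-hand side. By Corollary \ref{coro2.5} the sheaf inside $\bar{f}^{*}_{\mfw}$ is isomorphic to $\C_{-\rho+w\rhoL}\otimes \mathscr{L}^P(\rhoL-\rho)$. Since $\bar{f}_{\mfw}=\pi^P\circ f_{\mfw}$ and $\pi^{P*}\mathscr{L}^P(\lambda)=\mathscr{L}^B(\lambda)$ for any character $\lambda$ of $P$ (viewed as a character of $B$ by restriction), Proposition \ref{prop3.1} yields
$$
\bar{f}^{*}_{\mfw}\Bigl(\omega_{X^P_w}\otimes \O_{X^P_w}\bigl(\sum_{v\to w} m^P_{w,v} X^P_v\bigr)\Bigr)\simeq \C_{-\rho+w\rhoL}\otimes \O_{Z_{\mfw}}\Bigl(\sum_{i=1}^\ell \langle \rhoL-\rho,\gamma_i^{\vee}\rangle\,\partial_i Z_{\mfw}\Bigr).
$$

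Next I would reshape the right-hand side using the canonical class formula for the BSDH variety,
$$
\omega_{Z_{\mfw}}\simeq \C_{-\rho}\otimes \O_{Z_{\mfw}}\bigl(-\sum_{i=1}^\ell \partial_i Z_{\mfw}\bigr)\otimes f^{*}_{\mfw}\mathscr{L}^B(-\rho).
$$
Applying Proposition \ref{prop3.1} once more to $f^{*}_{\mfw}\mathscr{L}^B(-\rho)$ and combining with the extra twist $\O_{Z_{\mfw}}(\sum_i m^P_{\mfw,i}\partial_i Z_{\mfw})\otimes \C_{w\rhoL}$ appearing in the statement, the right-hand side becomes
$$
\C_{-\rho+w\rhoL}\otimes \O_{Z_{\mfw}}\Bigl(\sum_i \bigl(m^P_{\mfw,i}-1-\langle \rho,\gamma_i^{\vee}\rangle\bigr)\partial_i Z_{\mfw}\Bigr).
$$
Substituting $m^P_{\mfw,i}=1+\langle \rhoL,\gamma_i^{\vee}\rangle$ collapses the coefficient to $\langle \rhoL-\rho,\gamma_i^{\vee}\rangle$, matching the first expression.

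The only non-routine step is the canonical class formula for $Z_{\mfw}$. I expect to derive it by induction on $\ell$: the projection $\pi_{\mfw}\colon Z_{\mfw}\to Z_{\mfw[\ell]}$ appearing in the diagram of the proof of Proposition \ref{prop3.1} is a $\PP^1$-bundle whose distinguished section is $\partial_\ell Z_{\mfw}$, so $\omega_{Z_{\mfw}}=\pi_{\mfw}^{*}\omega_{Z_{\mfw[\ell]}}\otimes \omega_{Z_{\mfw}/Z_{\mfw[\ell]}}$, and the relative dualizing sheaf is read off from this section together with a twist involving $\mathscr{L}^B(-\alpha_{j_\ell})$. The equivariant shift $\C_{-\rho}$ is pinned down by restriction to the identity fixed point $e\in Z_{\mfw}$. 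Modulo this classical formula (present in \cite{BK} or \cite{Ku17}), the argument reduces to the bookkeeping of divisor coefficients above.
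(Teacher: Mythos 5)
Your proposal is correct and follows the same strategy as the paper: rewrite the left-hand side via Corollary~\ref{coro2.5}, pull back through $\bar f_{\mfw}=\pi^P\circ f_{\mfw}$, apply Proposition~\ref{prop3.1} to convert line-bundle pullbacks into divisor classes on $Z_{\mfw}$, and invoke the canonical class formula for the BSDH variety (which the paper cites directly from \cite[Proposition 2.2.2]{BK} rather than re-deriving). The only cosmetic difference is that you apply Proposition~\ref{prop3.1} to the single weight $\rhoL-\rho$ and then compare with the reshaped right-hand side, whereas the paper applies it to $\rhoL$ alone and lets equation~\eqref{eq3.2.3} absorb the $-\rho$ part; both lead to the same cancellation of coefficients.
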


\begin{proof}
By Corollary \ref{coro2.5},
\begin{equation} \label{eq3.2.1} \omega_{X^{P}_{w}}\otimes \O_{X^{P}_{w}}\left(\sum\limits_{v\to w} m^P_{w,v}X^{P}_{v}\right)\simeq \mathbb{C}_{-\rho+w\rhoL}\otimes \mathscr{L}^P(\rhoL-\rho).
\end{equation}
Further, by \cite [Proposition 2.2.2]{BK},
\begin{equation} \label{eq3.2.3}\omega_{Z_{\mfw}}\simeq \O_{Z_{\mfw}}\left(-\sum\limits^{\ell}_{i=1}\partial_{i}Z_{\mfw}\right)\otimes f^{*}_{\mfw}(\mathscr{L}^B(-\rho))\otimes \mathbb{C}_{-\rho}.
\end{equation} 
From the equation \eqref{eq3.2.1}, we obtain
\begin{align*}
& \bar{f}^{*}_{\mfw}\left(\omega_{X^{P}_{w}}\otimes \O_{X^{P}_{w}}\left(\sum\limits_{v\to w}m^P_{w,v}X^{P}_{v}\right)\right)\simeq \bar{f}^{*}_{\mfw}\left(\mathscr{L}^P(\rhoL-\rho)\right)\otimes \mathbb{C}_{-\rho+w\rhoL}\\[3pt]
&\qquad \simeq f^{*}_{\mfw}(\mathscr{L}^B(-\rho))\otimes f^{*}_{\mfw}\left(\mathscr{L}^B(\rhoL)\right)\otimes \mathbb{C}_{-\rho+w\rhoL}\\[3pt]
&\qquad \simeq  \O_{Z_{\mfw}}\left(\sum\limits^{\ell}_{i=1}\langle \rhoL,\gamma_i^{\vee}\rangle(\partial_{i}Z_{\mfw})\right)\otimes f^{*}_{\mfw}(\mathscr{L}^B(-\rho))\otimes \mathbb{C}_{-\rho+w\rhoL},\,\,\text{by Proposition \ref{prop3.1}}\\[3pt]
&\qquad = \O_{Z_{\mfw}}\left(\sum\limits^{\ell}_{i=1}\left(m^P_{\mfw,i}-1\right)(\partial_{i}Z_{\mfw})\right)\otimes f^{*}_{\mfw}(\mathscr{L}^B(-\rho))\otimes \mathbb{C}_{-\rho+w\rhoL}\\[3pt]
&\qquad = \O_{Z_{\mfw}}\left(\sum\limits^{\ell}_{i=1}m^P_{\mfw,i}(\partial_{i}Z_{\mfw})\right)\otimes \omega_{Z_{\mfw}}\otimes \mathbb{C}_{w\rhoL},\quad \text{by  \eqref{eq3.2.3}.}
\end{align*}
This proves the corollary.
\end{proof}
\begin{remark}\label{remark4.3}\rm 
In the case of $P=B$ the Corollary \ref{coro2.5} specializes to \cite[Theorem 3.2]{RW} or \cite[Exercise 3.4.E.1]{BK}. In this case $m^B_{w, v}=1$ for any
$v\to w$. 
Moreover, in this case,
 $\sum_{i=1}^{\ell}m^B_{\mfw,i}\,(\partial_i Z_\mfw)=\partial Z_\mfw$ since each $m^B_{\mfw,i} = 0$ by definition, where $\partial Z_\mfw :=\sum_{i=1}^\ell \partial_i Z_\mfw$.  .
\end{remark}

\section{The Borisov-Libgober elliptic characteristic class} \label{sec:BLgeneral}

We will study the Borisov-Libgober elliptic characteristic class of certain pairs $(X,\Delta)$. It is defined in \cite{BoLi0, BoLi1,BoLi2}, and the version we consider is in \cite{RW}. Here we recall the main definitions in a special case (torus-equivariant case with finitely many fixed points), which is sufficient for the purpose of this paper.

\subsection{Smooth case}
First, let $Z$ be a smooth variety and $D$ a simple normal crossing divisor. Assume that a torus $\T$ acts on $Z$ leaving $D$ stable. One can consider the elliptic class of $(Z,D)$ either in the $\T$-equivariant $K$-theory, or in the $\T$-equivariant elliptic cohomology%
\[
\Ellt(Z,D)\in K_{\T}(Z)(h),\qquad \Ellt^E(Z,D)\in \EH_{\T}(Z)(h).
\]
{Here we use elliptic cohomology in its traditional sense: it is a generalized complex-oriented cohomology theory, see \cite{La}. Because of a lack of a convenient definition of {\it equivariant} elliptic cohomology we rather study the image of the 
elliptic class in Borel equivariant cohomology or $K$-theory, see \cite[\S3]{MW}.
In the sense of recent approaches to $\EH_{\T}$, as in  \cite[Section 2]{AO}, \cite[Section 4]{FRV18}, or \cite[Section 7]{RTV}, elements of our $\EH_{\T}$ are sections of certain line bundles over the {\em elliptic cohomology scheme} considered in those works. The Euler class of a vector bundle is a section of a {\it Thom bundle}, see \cite[\S7]{Ga}.}

Here is the  definition of the elliptic class in the special case when $|Z^{\T}|<\infty$. In this case they are defined by their restrictions to $\T$-fixed points. For a fixed point $x$ we have
\begin{align*}
\Ellt(Z,D)_{|x}=&e(T_xZ) \prod_{k=1}^{\dim Z} \frac{\vt( \chi_k\, h^{1-a_k})\vt'(1)}{\vt(\chi_k)\vt(h^{1-a_k})},
\\ \\\Ellt^E(Z,D)_{|x}=&e^E(T_xZ) \prod_{k=1}^{\dim Z} \frac{\vt( \chi_k\, h^{1-a_k})\vt'(1)}{\vt(\chi_k)\vt(h^{1-a_k})}=\prod_{k=1}^{\dim Z} \frac{\vt( \chi_k\, h^{1-a_k})\vt'(1)}{\vt(h^{1-a_k})},
\end{align*}
where the products are taken with respect to the  equivariant coordinates at $x$ 
and
\begin{itemize}
\item $\chi_k\in K_\T(\{x\})=R(\T)$ is the character of the $k$-th coordinate;
\item $e(T_xZ)=\prod(1-\chi_k^{-1})$ and $e^E(T_xZ)=\prod \vt(\chi_k)$ are the equivariant Euler classes in $K$-theory and elliptic cohomology;
\item $a_k\in\Q$ is the multiplicity of the divisor along the $k$-th coordinate; and
\item $$\vt(x)=
(x^{1/2}-x^{-1/2})\prod_{n\geq 1}(1-q^n x)(1-q^n /x)
$$ is (a version of) the theta function which is considered in \cite{RTV, RW}. 
Here it is treated as a formal series in $x^{\pm1/2}$. The variable $q$ is treated as a constant. 
\end{itemize}
We have to assume  that  1 does not appear among the multiplicities of $D$, otherwise we have 0 in the denominator.

It is worth getting rid of the dependence on which cohomology theory we are in, and work with the elliptic class
\[
\EE(Z,D)=\frac{\Ellt(Z,D)}{e(TZ)} = \frac{\Ellt^E(Z,D)}{e^E(TZ)}.
\]
Then, using the notation 
\[
\delta(x,y)=\frac{\vt(x y)\vt'(1)}{\vt(x)\vt(y)}
\]
we have
\[
\EE(Z,D)_x:=\EE(Z,D)_{|x}=\prod_{k=1}^{\dim Z} \delta( \chi_k, h^{1-a_k}).
\]

\subsection{Singular case}
Let the $\T$-stable singular pair $(X,\Delta)$ be embedded in a smooth ambient $\T$-variety $M$, and assume that $K_X+\Delta$ is $\Q$-Cartier. The $\T$-equivariant elliptic classes of the pair $(X,\Delta)$ are defined by
\[
\Ellt(X,\Delta;M)=f_*\Ellt(Z,D)\in K_{\T}(M)(h), \qquad \Ellt^E(X,\Delta;M)=f_*\Ellt^E(Z,D)\in \EH_{\T}(M)(h)
\]
where $f:Z\to X$ is a $\T$-equivariant resolution of singularities and $K_Z+D=f^*(K_X+\Delta)$. 
If the multiplicities of $D$ are smaller\footnote{The discrepancy divisor is equal to $-D$. We  do not assume that $\Delta$ is effective.
} than 1, then the definition does not depend on the resolution, by \cite{BoLi1}.

\begin{assumption}\label{assu}To have well defined elliptic class we assume that the coefficients of $D$ are smaller than 1.\end{assumption}

Just like in the smooth case, it is worth considering the version
\begin{equation}\label{EEdef}
\EE(X,\Delta)=\frac{\Ellt(X,\Delta;M)}{e(TM)}=\frac{\Ellt^E(X,\Delta;M)}{e^E(TM)}\;\in\; e(TM)^{-1}K_{\T}(X)(q,h) .
\end{equation}
Note that, assuming $|M^{\T}|<\infty$, the Euler 
class $e(TM)$ is not a zero divisor in the 
localization $S^{-1}K_{\T}(M)$, where $S\subset 
K_{\T}(pt)={\rm R}(\T)$ is the multiplicative system generated by $1-\C_\lambda$, $\lambda\in\t^*_{\Z}$.
Assuming that the number of torus fixed points on $X$ and $Z$ are finite, the restriction of $\EE(X,\Delta)$ to a $\T$-fixed point $x$ will be denoted by $\EE(X,\Delta)_x$. These latter classes are elements of the fraction field of $K_{\T}(\pt)(q,h)$, and are also independent of the ambient manifold $M$---that is why we dropped $M$ from the notation.

\subsection{Push-forward}\label{sec:push}
In the  case we study, i.e.,  that of finitely many $\T$-fixed points, the push-forward map $f_*$ can be described as follows. Let $(Z,D)$ be the resolution of $(X,\Delta)$ as above and $x$ a $\T$-fixed point in $X$. Then, according to Lefschetz-Riemann-Roch, which is the equivariant localization description of push-forward maps \cite[Thm. 5.11.7]{ChGi},  
we have 
\[
\EE(X,\Delta)_x=\sum_{y\in f^{-1}(x) \cap Z^{\T}} \EE(Z,D)_y.
\]

\section{Elliptic classes of Schubert varieties}
Our main object of study is the  equivariant elliptic characteristic classes of Schubert varieties, living in the $\T$-equivariant $K$-theory or elliptic cohomology of $G/P$. By the nature of the definition of elliptic classes (see Section~\ref{sec:BLgeneral}) we need to consider not the Schubert varieties or the Schubert cells themselves, but pairs $(X^P_w,\Delta^P_\om)$, where $\Delta^P_\om$ is a certain $\T$-stable $\Q$-divisor contained in $\partial X^P_\om$, such that $K_{X^P_w}+\Delta^P_w$ is $\Q$-Cartier. 

\subsection{The class $E(X^P_\om)$}\label{ourdivisor}

{Let $X^P_\om$ be a Schubert variety in $G/P$,  $\lambda$ a character of $P$ and assume that the line bundle $\mathscr{L}^P(\lambda)$ over $X_w^P$  is ample. Let $\Delta^P_{\om,\lambda}$ be the zero divisor of the unique (up to scalar multiples)  $U$-invariant section (eigenvector) of $ \mathscr{L}^P(\lambda)_{|X^P_\om}$, where $U:=[B,B]$ is the unipotent radical of $B$. Then, the support of $\Delta^P_{\om,\lambda}$ is precisely equal to $\partial X^P_w :=\cup_{v\to w}\, X^P_v$. 
 Consider  the pair 
\begin{equation}\label{eqn:pair}
(\;X_\om^P\;,\;\sum_{v\to w} m^P_{\om,v}\, X^P_v - t \Delta^P_{\om,\lambda}\;),
\end{equation}
where the coefficients $m^P_{\om,v}$ are from Lemma \ref{lem2}. Our main object of study is the $\EE$ class (see~\eqref{EEdef}) of this pair. For this to make sense we need to show that the requirements of such a pair are satisfied. 

\begin{remark}\rm For the case $P=B$ Ganter and Ram \cite{GR13} suggested to consider the boundary divisor equal to $\partial X^B_w-t\Delta^P_{\om,\rho}$ for $0<t\ll 1$, but in our approach instead of $\rho$ we allow any weight $\lambda$ defining a sufficiently ample line bundle on $X^P_w$.
\end{remark}

Fix a reduced word $\w$ of $w$. As earlier, let $\bar f_\w:Z_\w\to X_\om^P$ be the composition of the BSDH resolution $f_\mfw: Z_\mfw \to X_w^B$ with the quotient map $X_w^B\to X_w^P$. Let
$$\bar f^*_\w\left(K_{X_\om^P}+\sum_{v\to w}\, m^P_{\om,v}\, X_v^P - t\Delta^P_{\om,\lambda}\right)=K_{Z_\w}+\sum_{i=1}^\ell a_i \partial_i Z_\w\,.$$
By  Corollary \ref{coro2} the coefficients $a_i<1$ if $t\gg0$, that is, for large $t$ Assumption~\ref{assu} is satisfied; and the $\EE$ class of \eqref{eqn:pair} is indeed well defined.

Let us rephrase this construction without mentioning $t$: allowing rational weights $\lambda$, the class 
\begin{equation}\label{Epair}
\EE(X_\om^P,\sum_{v\to w} m^P_{\om,v} X^P_v - \Delta^P_{\om,\lambda})
\end{equation}
is well defined for $\lambda$ belonging to a certain open subset of $(\t_{\Q}^*)^{W_P}$. For these $\lambda$'s the dependence of \eqref{Epair} on $\lambda$ is an explicit meromorphic function in $\lambda$ (this follows from the push-forward formalism described in Section \ref{sec:push}). This meromorphic function, now considered for all $\lambda\in (\t^*)^{W_P}$, is our main object: the {\em elliptic class of the Schubert variety $X^P_\om$}. We will denote it by $E(X^P_\om)$, or by $E(X^P_\om,\lambda)$ if we want to emphasize the $\lambda$-dependence. In some calculation below we will assume that ``$\lambda$ is large enough'' so that $E(X^P_\om)$ equals \eqref{Epair};  thus obtained formulas then must hold for the meromorphic function $E(X^P_\om)$.

\subsection{Elliptic classes in the BSDH resolution}
Observe that, by Proposition \ref{prop3.1}, Corollary~\ref{coro2} and Remark \ref{remark4.3}, 
\begin{align}\label{eq:PandB}\bar{f}_\w^*(K_{X^P_w}+\sum_{v\to w} m^P_{\om,v}\, X_v^P - \Delta^P_{\om,\lambda})&=K_{Z_\w}+\partial Z_\w- f_\w^*(\mathscr{L}^B(\lambda-\rhoL))\notag\\
&=f_\w^*(K_{X^B_w}+\partial X^B_w - \Delta^B_{\om,\lambda-\rhoL})
\,,\end{align}
where $\partial Z_\w:= \sum_{i=1}^\ell \partial_i Z_\w$. 
Note that the bundle $ \mathscr{L}^B(\rhoL)$ does not come from $X_w^P$.
The class $E(X^P_\om,\lambda)$ is obtained from 
\[E(Z_\w,\lambda-\rhoL):=\EE(Z_\w\;,\;\partial Z_\w -  f_\w^*(\Delta^B_{w,\lambda-\rhoL}))
\]
using the localization formula in Section \ref{sec:push}.

The class $E(Z_\w,\lambda-\rhoL)$ is determined by its restrictions to the torus fixed points. The $\T$-fixed points of $Z_\w$ are indexed by the subwords of $\v\subset\w$.
From the well-known combinatorial description of the BSDH resolution (see \cite[\S\S3.2 and 4.3]{RW}), we obtain 
\begin{proposition}\label{prop6.1} Let $w\in W$ and let $\w$ be a reduced word for $w$. Then, for any (not necessarily reduced)
subword $\v$ of $\w$, we have:
\begin{equation}\label{eq:EZ}
E(Z_\w,\lambda)_\v=\prod_{i=1}^\ell \delta(e^{-v_{[1,i]}\alpha_{j_i}},\psi(i)),
\end{equation}
where
$v_{[1,i]}$ is the product of $s_{j_k}$'s with $k\leq i$ appearing in $\v$ and
\begin{align*}\psi(i)&=\begin{cases}h&\text{if } \text{$i$-th letter of }\w\text{ is not omitted in }\v\\
h^{\langle\,\lambda\,,\, \gamma_i^\vee\,\rangle}&\text{otherwise\,,}\end{cases}
\end{align*}
where $\gamma_i$ is as in Proposition \ref{prop3.1}.
\end{proposition}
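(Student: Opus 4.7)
The plan is to apply the smooth-case formula
\[
\EE(Z,D)_x = \prod_{k=1}^{\dim Z} \delta(\chi_k, h^{1-a_k})
\]
recalled in Section \ref{sec:BLgeneral} to the smooth pair $(Z_\w, D)$ with $D := \partial Z_\w - f_\w^*(\Delta^B_{w,\lambda})$, at the $\T$-fixed point indexed by a subword $\v \subset \w$. This reduces the problem to computing two pieces of data at $\v$: the equivariant tangent characters $\chi_i$, and the coefficient $a_i$ of $D$ along the $i$-th local coordinate.

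For the tangent characters, I would use the iterated $\PP^1$-bundle presentation $Z_\w = P_{j_1} \times^B P_{j_2} \times^B \cdots \times^B P_{j_\ell}/B$, in which the $\T$-fixed points are in bijection with subwords $\v$ of $\w$: each $\PP^1$-fiber $P_{j_i}/B$ contributes the binary choice of $eB$ (if $s_{j_i}$ is omitted from $\v$) or $s_{j_i}B$ (if it is included). The tangent space at $\v$ splits as a direct sum of $\ell$ tangent lines to the successive fibers, and twisting the weight $\mp \alpha_{j_i}$ of the local $\PP^1$ factor by the partial product $v_{[1,i-1]}$ gives the $i$-th tangent weight as $-v_{[1,i-1]}\alpha_{j_i}$ or $v_{[1,i-1]}\alpha_{j_i}$ according to the two choices; both cases collapse to the uniform expression $\chi_i = e^{-v_{[1,i]}\alpha_{j_i}}$, since the sign flip is absorbed by the passage from $v_{[1,i-1]}$ to $v_{[1,i]} = v_{[1,i-1]}s_{j_i}$. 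This combinatorial picture is exactly the one cited from \cite[\S\S 3.2, 4.3]{RW}.

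For the multiplicities, I would invoke Proposition \ref{prop3.1} to identify
\[
f_\w^*(\mathscr{L}^B(\lambda)) \simeq \O_{Z_\w}\Big(\sum_{i=1}^\ell \langle \lambda, \gamma_i^\vee\rangle\, \partial_i Z_\w\Big),
\]
and then upgrade this linear-equivalence statement to an equality of actual divisors $f_\w^*(\Delta^B_{w,\lambda}) = \sum_i \langle \lambda, \gamma_i^\vee\rangle\, \partial_i Z_\w$ by invoking the uniqueness (up to scalar) of the $U$-invariant section of this line bundle and noting that $f_\w$ pulls $U$-invariant sections to $U$-invariant sections supported on $\partial Z_\w$. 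Consequently $D = \sum_i(1-\langle \lambda, \gamma_i^\vee\rangle)\partial_i Z_\w$. Since $\partial_i Z_\w$ is the identity section in the $i$-th $\PP^1$-fiber, it passes through $\v$ precisely when $s_{j_i}$ is omitted from $\v$, and is locally cut out there by the $i$-th coordinate. Hence $a_i = 1 - \langle \lambda, \gamma_i^\vee\rangle$ when $s_{j_i}$ is omitted and $a_i = 0$ otherwise, giving $h^{1-a_i} = \psi(i)$ in both regimes. Plugging $\chi_i$ and $\psi(i)$ into the product formula yields \eqref{eq:EZ}.

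The main obstacle I anticipate is the upgrade from linear equivalence to an equality of actual divisors for $f_\w^*(\Delta^B_{w,\lambda})$; an induction along the BSDH tower, or a direct uniqueness argument for $U$-invariant sections on each $\PP^1$-bundle stage, should dispatch it. Everything else is a clean assembly of the smooth-case formula with standard BSDH combinatorics.
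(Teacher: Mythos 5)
Your proof is correct and follows essentially the same route as the paper: apply the fixed-point product formula for $\EE$ of a smooth pair, read the tangent characters off the iterated $\PP^1$-bundle presentation of $Z_\w$ (exactly the \cite[\S3.2]{RW} combinatorics), and compute the coefficients $a_i$ of $D$ from Proposition~\ref{prop3.1}. The one worry you flag --- upgrading the linear equivalence of Proposition~\ref{prop3.1} to an equality of actual divisors $f_\w^*(\Delta^B_{w,\lambda}) = \sum_i\langle\lambda,\gamma_i^\vee\rangle\,\partial_i Z_\w$ --- is a non-issue and does not require the induction you sketch: since $Z_\w\setminus\partial Z_\w$ is an affine space, the divisor class group of $Z_\w$ is free on the classes $[\partial_i Z_\w]$, so two divisors supported on $\partial Z_\w$ are linearly equivalent if and only if they are literally equal, and the pullback of a $U$-invariant section is $U$-invariant hence vanishes only along the boundary. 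With that observation your argument is complete and coincides with the paper's (much terser) proof.
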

\begin{proof}
The multiplicity of the divisor $f_\w^*(\Delta^B_{w,\lambda})$ along  $\partial_i Z_\w$ is equal to (using Proposition \ref{prop3.1})
$$\langle\,\lambda\,,\, \gamma_i^\vee\,\rangle\,.$$
The tangent weights are the same as in \cite[\S 3.2]{RW}.\end{proof}

Recall that, by Subsection \ref{sec:push}, we have the following: for any $v, w\in W$ (choosing a reduced word $\w$  for $w$):
\begin{equation} \label{eqn6.3.2} E(X_w^B, \lambda)_v= \sum_{\v}\, E(Z_\w, \lambda)_\v,
\end{equation}
where the summation runs over those (not necessarily reduced) subwords $\v$ of $\w$ for which $\mu(\v) = v$. Here 
$\mu(\v)= s_{i_1}\dots s_{i_p}$ for the word $\v= (s_{i_1}, \dots ,  s_{i_p})$.  In particular,
\begin{equation} \label{eqn6.3.3} E(X^B_w, \lambda)_v = 0,\,\,\,\text{if $v\nleq w$}.
\end{equation}

\noindent We note that \eqref{eq:PandB} also implies the following corollary.  

\begin{corollary}\label{cor:Esums}
Let $\lambda\in\t^*$ be a $W_P$-invariant weight, $w\in W^P$. Then, for any $v\in W^P$,
$$E(X^P_w,\lambda)_{v}=\sum_{u\in W_P}E(X^B_w,\lambda-\rhoL)_{vu}\,.$$
(In particular, if $v \nleq w$, then $E(X^P_w,\lambda)_{v}= 0$ by using the above identity and the Identity \eqref{eqn6.3.3}.)

Equivalently,
$$E(X^P_w,\lambda)=\pi^{P}_*E(X^B_w,\lambda-\rhoL)\,,$$
where $\pi^{P}:G/B\to G/P$ is the natural quotient map. \qed
\end{corollary}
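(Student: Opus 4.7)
The plan is to derive the corollary directly from equation \eqref{eq:PandB} via the push-forward formalism of \S\ref{sec:push}. The key observation is that \eqref{eq:PandB} identifies the pullbacks to $Z_\w$ of the $P$-divisor pair (with parameter $\lambda$) and the $B$-divisor pair (with parameter $\lambda-\rhoL$), so both $E(X_w^P,\lambda)$ and $E(X_w^B,\lambda-\rhoL)$ will turn out to be push-forwards of the same elliptic class on $Z_\w$.

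First, I would verify the resolution-level identity $E(X_w^P,\lambda)=(\bar{f}_\w)_* E(Z_\w,\lambda-\rhoL)$, together with its $B$-analog $E(X_w^B,\lambda-\rhoL)=(f_\w)_* E(Z_\w,\lambda-\rhoL)$. For $\lambda$ large enough for Assumption~\ref{assu} to apply, the singular-case definition of \S\ref{sec:BLgeneral} writes $\EE(X_w^P,\sum_{v\to w}m_{w,v}^P X_v^P-\Delta^P_{w,\lambda})$ as $(\bar{f}_\w)_*\EE(Z_\w,D)$, with $D$ determined by $K_{Z_\w}+D=\bar{f}_\w^*(K_{X_w^P}+\sum m_{w,v}^P X_v^P-\Delta^P_{w,\lambda})$. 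By \eqref{eq:PandB} this forces $D=\partial Z_\w - f_\w^*\Delta^B_{w,\lambda-\rhoL}$, which is exactly the divisor in the definition of $E(Z_\w,\lambda-\rhoL)$ given at the beginning of this subsection; the $B$-analog is the same construction with $P=B$. Both identities propagate to all $\lambda$ by the meromorphic extension discussed at the end of \S\ref{ourdivisor}.

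Second, since $\bar{f}_\w=\pi^P\circ f_\w$, functoriality of the push-forward immediately yields the sheaf-level identity $E(X_w^P,\lambda)=\pi^P_* E(X_w^B,\lambda-\rhoL)$, which is the ``Equivalently'' form. To extract the fixed-point formula, I would restrict both sides to $v\in W^P$ and apply the Lefschetz-Riemann-Roch description of push-forwards from \S\ref{sec:push}: $(\pi^P_*\alpha)_v=\sum_{y\in(\pi^P)^{-1}(v)\cap(G/B)^\T}\alpha_y$. The $\T$-fixed points of the fiber $(\pi^P)^{-1}(v)$ are precisely $\{vu:u\in W_P\}$ because $W^P$ is a system of minimal-length coset representatives for $W/W_P$, which produces the stated sum.

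Finally, for the parenthetical vanishing when $v\nleq w$: for $v\in W^P$ and $u\in W_P$ one has $\ell(vu)=\ell(v)+\ell(u)$, so a reduced expression for $v$ is a prefix of a reduced expression for $vu$, hence $v\le vu$ in the Bruhat order. Contrapositively, $v\nleq w$ forces $vu\nleq w$ for every $u\in W_P$, so every summand vanishes by \eqref{eqn6.3.3}. The only nontrivial ingredient is \eqref{eq:PandB}; the rest is formal, and I anticipate no real obstacle beyond keeping straight the push-forward compatibility and the meromorphic $\lambda$-extension.
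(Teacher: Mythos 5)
Your proposal is correct and fills in exactly the details the paper leaves implicit: the paper's entire proof is the remark that the corollary follows from \eqref{eq:PandB}, and your route---same resolution $Z_\w$, same pulled-back pair by \eqref{eq:PandB}, push-forward functoriality via $\bar f_\w=\pi^P\circ f_\w$, Lefschetz--Riemann--Roch summed over $(\pi^P)^{-1}(v)^\T=\{vu:u\in W_P\}$, and the parenthetical from $v\le vu$---is precisely the intended argument.
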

In Section \ref{sec:weight}, for $G=\SL_n$ we will identify the functions $E(X_w^P,\lambda)_v$ with some substitutions of well-known special functions called weight functions. Corollary \ref{cor:Esums} seems to be a  new result for those substitutions of weight functions.


\subsection{Recursions}
The Schubert varieties in $G/P$ are parametrized by cosets in $W/W_P$. Our goal is to describe the behavior of the elliptic class when we pass from $w$ to $s_\alpha w$ for a  simple reflection $s_\alpha$ such that  $\dim X^P_{s_\alpha w}>\dim X^P_{w}$. First, we solve the recursion for the BSDH-variety $Z_\w$, which is a resolution of $X_w^B$ as well as  $X_w^P$, provided  $w\in W^P$.
Having an explicit formula for $E(Z_\w,\lambda - \rhoL )_\v$, we obtain a recursion for the classes of the BSDH resolution and then we push  it down to $X^P_w$. It turns out that the recursion is well defined for the elliptic classes of Schubert varieties.

\begin{theorem}\label{thm:Erecursion}
Let $\alpha$ be a simple root and $w\in W^P$. If $\dim X^P_\om<\dim X^P_{s_\alpha \om}$ (in particular, $s_\alpha \om\in W^P$),  then, 
for any coset $[v]\in W/W_P$,
\begin{equation}\label{eqn6.3.1} E(X^P_{s_\alpha\om},\lambda)_{[v]}=
\delta\!\left(e^{-\alpha},h^{\langle\lambda-\rhoL,\om^{-1}\alpha^\vee\rangle}\right) 
\cdot E(X^P_\om,\lambda)_{[v]} +
\delta\!\left(e^{\alpha},h\right)\cdot 
s_\alpha^z\left[E(X^P_\om,\lambda)_{[s_\alpha v]}\right],
\end{equation}
where $s_\alpha^z[-]$ is the action of $s_\alpha$ on the equivariant parameters of $K$-theory.
\end{theorem}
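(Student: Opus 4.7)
The plan is to compare the BSDH localization formula of Proposition~\ref{prop6.1} for $w$ and for $s_\alpha w$, and then to descend from the $B$-version of the recursion to the $P$-version via Corollary~\ref{cor:Esums}. Since $w\in W^P$ and $\dim X^P_{s_\alpha w}>\dim X^P_w$, we have $\ell(s_\alpha w)=\ell(w)+1$ and $s_\alpha w\in W^P$, so for any reduced word $\w=(s_{j_1},\dots,s_{j_\ell})$ of $w$, the word $\w'=(s_\alpha,s_{j_1},\dots,s_{j_\ell})$ is a reduced word for $s_\alpha w$. Every subword $\v'$ of $\w'$ is either of \emph{Type B}, where the leading $s_\alpha$ is omitted (so $\v'=\v$ for a subword $\v$ of $\w$, with $\mu(\v')=\mu(\v)$), or of \emph{Type A}, where the leading $s_\alpha$ is retained (so $\v'=(s_\alpha,\v)$ and $\mu(\v')=s_\alpha\mu(\v)$).

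I would then apply Proposition~\ref{prop6.1} to $\w'$ and show that the resulting product for $\v'$ factors as a position-$1$ contribution times either the product for $\v$ or its $s_\alpha^z$-image. Using $\gamma_i=s_{j_\ell}\cdots s_{j_{i+1}}\alpha_{j_i}$, one checks $\gamma'_{i+1}(\w')=\gamma_i(\w)$ for $i\geq 1$, so the $\psi$-values match: $\psi_{\w'}(i+1)=\psi_\w(i)$. For the prefix products, $v'_{[1,i+1]}=v_{[1,i]}$ in Type B while $v'_{[1,i+1]}=s_\alpha v_{[1,i]}$ in Type A; hence, since $\psi_\w(i)$ does not involve the equivariant parameters, the product over $i\geq 1$ equals $E(Z_\w,\lambda-\rhoL)_\v$ in Type B and $s_\alpha^z\bigl[E(Z_\w,\lambda-\rhoL)_\v\bigr]$ in Type A. At position $1$, Type B gives tangent weight $e^{-\alpha}$ and $\psi(1)=h^{\langle\lambda-\rhoL,\,w^{-1}\alpha^\vee\rangle}$ (since $\gamma'_1=w^{-1}\alpha$), while Type A gives tangent weight $e^{-s_\alpha\alpha}=e^{\alpha}$ and $\psi(1)=h$.

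Combining these, the localization formula for $\w'$ reads
$$
E(Z_{\w'},\lambda-\rhoL)_{\v'}=
\begin{cases}
\delta\!\left(e^{-\alpha},h^{\langle\lambda-\rhoL,\,w^{-1}\alpha^\vee\rangle}\right)\cdot E(Z_\w,\lambda-\rhoL)_\v, & \text{Type B,}\\[4pt]
\delta\!\left(e^{\alpha},h\right)\cdot s_\alpha^z\bigl[E(Z_\w,\lambda-\rhoL)_\v\bigr], & \text{Type A.}
\end{cases}
$$
Summing over subwords $\v'$ of $\w'$ with $\mu(\v')=v'$ and using \eqref{eqn6.3.2} yields the $B$-analogue of the recursion,
$$
E(X^B_{s_\alpha w},\lambda-\rhoL)_{v'}=\delta\!\left(e^{-\alpha},h^{\langle\lambda-\rhoL,\,w^{-1}\alpha^\vee\rangle}\right)E(X^B_w,\lambda-\rhoL)_{v'}+\delta\!\left(e^{\alpha},h\right)\, s_\alpha^z\bigl[E(X^B_w,\lambda-\rhoL)_{s_\alpha v'}\bigr].
$$
Finally, for $v\in W$, substitute $v'=vu$ with $u\in W_P$ and sum over $u$: the $\delta$-prefactors are independent of $u$, so they factor out, and the $B$-sums collapse, by Corollary~\ref{cor:Esums}, to $E(X^P_w,\lambda)_{[v]}$ and $E(X^P_w,\lambda)_{[s_\alpha v]}$ respectively, which is the desired identity at $[v]\in W/W_P$.

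The main obstacle is the Type A bookkeeping: one must confirm that the uniform left-multiplication by $s_\alpha$ of \emph{every} prefix $v_{[1,i]}$ exports as a single global $s_\alpha^z$ applied to $E(Z_\w,\lambda-\rhoL)_\v$. This works cleanly because the $\psi(i)$ are independent of the equivariant parameters and $s_\alpha^z$ is a ring endomorphism, so it commutes with both the product over $i$ and the summation over subwords $\v$.
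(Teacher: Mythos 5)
Your proposal is correct and follows essentially the same approach as the paper: decompose subwords of $s_\alpha\w$ by whether the leading $s_\alpha$ is retained, use the shift $\gamma'_{i+1}(s_\alpha\w)=\gamma_i(\w)$ (so $\psi$-values match) and $\gamma'_1=w^{-1}\alpha$ for the position-$1$ factor, observe that retaining the leading letter multiplies every prefix $v_{[1,i]}$ on the left by $s_\alpha$ (hence a global $s_\alpha^z$ since $\psi(i)$ has no $z$-dependence), and push forward via Corollary~\ref{cor:Esums}. The only cosmetic difference is that you explicitly state the intermediate $B$-version of the recursion before summing over $W_P$, whereas the paper folds that summation directly into the subword count and separately notes the vacuous case $[v]\nleq[s_\alpha w]$.
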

\noindent The notation ``$s_\alpha^z$'' will be justified below. 

\begin{proof} We first define, for any two cosets, $[u]\leq [v]$ if and only if $u'\leq v'$, where $u'$ is the smallest length coset representative in $ [u]$.   If $[v]\nleq [s_\alpha w]$, then so is  $[s_\alpha v]\nleq [w]$ (use \cite[Corollary 1.3.19]{Ku02}). Thus, both the sides of the equation \eqref{eqn6.3.1} are zero by Corollary \ref{cor:Esums}. 

So, assume that $[v]\leq [s_\alpha w]$. Let $s_\alpha \w$ be a reduced word for  $s_\alpha w$. Let $\v$ be a subword (not necessarily reduced) of  $s_\alpha \w$. 
Let $\v'=\v\cap \w$, i.e.,  $\v'=\v$ if the first letter of $s_\alpha \w$ is omitted in $\v$ and $s_\alpha\v'=\v$ otherwise. 
 Then, by Proposition \ref{prop6.1}, 
\begin{equation}\label{eq:EZrecursion}E(Z_{s_\alpha\w},\lambda-\rhoL)_\v=\begin{cases}
\delta\!\left(e^{\alpha},h\right)\, 
s_\alpha^z\cdot [E(Z_\w,\lambda-\rhoL)_{\v'}]\quad\text{if the first letter of $s_\alpha\w$ is not omitted in }\v,\\
\delta\!\left(e^{-\alpha},h^{\langle\lambda-\rhoL,\om^{-1}\alpha^\vee\rangle}\right)\, E(Z_\w,\lambda-\rhoL)_{\v'}\quad\text{otherwise}\,.\end{cases}\end{equation}

To compute $E(X^P_{s_\alpha\om},\lambda)_{[v]}$, by Corollary \ref{cor:Esums} and the Identity \eqref{eqn6.3.2}, we sum up the contributions coming from  $E(Z_{s_\alpha\om},\lambda-\rhoL)_\v$, where $\v$ varies over those  (not necessarily reduced) subwords $\v$ of $s_\alpha\w$ such that $\mu(\v)\in [v]$.
Let us examine the first factor of the product \eqref{eq:EZ}, appearing in \eqref{eq:EZrecursion}:
\begin{itemize}
\item If the first letter of $s_\alpha\w$ is not omitted in $\v$, then the corresponding factor is equal to  
$$\delta\!\left(e^{\alpha},h\right).$$
In the remaining factors the variables in the first argument of $\delta$ should be changed by the action of $s_\alpha$.
\item If the first letter of $s_\alpha\w$ is omitted in $\v$, then the corresponding factor is equal to  
$$
\delta\!\left(e^{-\alpha},h^{\langle\lambda-\rhoL,\om^{-1}\alpha^\vee\rangle}\right)
\,.$$
The remaining factors are unchanged.
\end{itemize}
Therefore,  we obtain two kinds of summands in the decomposition of $E(X^P_{s_\alpha w},\lambda)_{[v]}$, one coming from the subwords $\v$ of $s_\alpha\w$ which do not contain the first letter $s_\alpha$ which contribute to 
$E(X_w^P, \lambda)_{[v]}$ and the other coming from those subwords $\v$ which do contain the  first letter $s_\alpha$ and hence  contribute to 
$E(X_w^P, \lambda)_{[s_\alpha v]}$. 
\end{proof}
The following lemma for $v\neq 1$ follows from Corollary \ref{cor:Esums}, and for $v=1$ it follows easily from the definition.
\begin{lemma}\label{thm:Einitial} For $v\in W^P$, 
we have
\[
E(X^P_{1},\lambda)_v=
\begin{cases}
1 & \text{if } v=1 \\
0 & \text{otherwise.}
\end{cases}
\]
\end{lemma}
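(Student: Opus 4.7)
The plan is to split into two cases according to whether $v=1$, as suggested by the note preceding the statement.

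For $v\neq 1$, I will invoke Corollary \ref{cor:Esums}, whose parenthetical records that $E(X^P_w,\lambda)_v=0$ whenever $v\nleq w$ in the Bruhat order on $W^P$. Taking $w=1$ and using that $1$ is the minimum element of this order, any $v\in W^P$ with $v\neq 1$ satisfies $v\nleq 1$, and the desired vanishing is immediate.

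For $v=1$, I will unwind the definition of $E(X^P_w,\lambda)$ from Subsection \ref{ourdivisor}. Since $B\subset P$, we have $X^P_1=\overline{BP/P}=\{eP/P\}$, a single reduced point. There is no $u\in W^P$ with $u\to 1$, so the boundary divisor $\sum_{u\to 1}m^P_{1,u}X^P_u$ is zero. On a point, the line bundle $\mathscr{L}^P(\lambda)|_{X^P_1}$ is canonically trivial and its unique (up to scalar) $U$-invariant section is a nonzero constant, so $\Delta^P_{1,\lambda}=0$ as well. Thus the pair in \eqref{eqn:pair} reduces to $(\{eP/P\},0)$, for which the trivial resolution $Z=X^P_1$ with $D=0$ qualifies. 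The localization formula of Section \ref{sec:push} then has a single summand, and the smooth-case formula evaluated at the zero-dimensional $Z$ is the empty product $\prod_{k=1}^{0}\delta(\cdots)=1$.

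There is no genuine obstacle; the only item worth double-checking is the standard fact that $1$ is the unique minimum of the induced Bruhat order on $W^P$, which ensures that $v\neq 1$ really does force $v\nleq 1$ and hence the vanishing in the first case.
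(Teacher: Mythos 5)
Your proof is correct and follows the paper's stated approach exactly: the $v\neq1$ case via the triangularity in Corollary \ref{cor:Esums}, and the $v=1$ case by unwinding the definition (a point, empty boundary, trivial resolution, empty product equals $1$). You merely supply details that the paper leaves implicit.
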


The recursion with initial condition presented in Theorem \ref{thm:Erecursion} and Lemma \ref{thm:Einitial} is an effective way of computing the fixed point restrictions of the elliptic classes $E(X^P_\om)$. We invite the reader to verify the initial condition and the recursion in the following examples. In these examples we  consider homogeneous spaces for $G=\SL_n$. It is convenient to extend the action to $\GL_n$ and to have $n$-dimensional maximal torus. We use the notation $z_i=e^{\varepsilon_i^*}$ ($\varepsilon_i$ is the standard basis of $\t=\C^n$ 
for $n=2,3,4$), 
for more general notation for the natural variables of $E(X_w)_v$ for $G=\GL_n$ see the next section.

\begin{example} \rm \label{ex:GL2}
For $G=\SL_2$, $P=B$,  $W=\{1, s\}$, we have:

\centerline{
\begin{tabular}{ll}
$E(X^P_{1})_{1}= 1,$ & $E(X^P_{1})_s=0,$ \\
$E(X^P_{s})_{1}=\delta(z_2/z_1,\mu_2/\mu_1),$ & $E(X^P_s)_s=\delta(z_1/z_2,h)\,,$
\end{tabular}
}
\noindent where $\lambda = (\lambda_1, \lambda_2)$ and $\mu_i=h^{-\lambda_i}$. Observe the obvious triangularity property $v\not\leq \om \Rightarrow E(X^P_\om)_v=0$, and that the `diagonal' restrictions $E(X^P_\om)_\om=\prod_\chi\delta(\chi,h)$ for the weights $\chi$ of $T_\om X^P_\om$. The off-diagonal restrictions may be complicated formulas in general.
\end{example}

\begin{example} \rm \label{ex:GL3}
For $G=\SL_3$, $P=B$, with analogous notation, we have the following fixed point restrictions.
\medskip

{\def\arraystretch{1.5}\begin{tabular}{|c|ccccc}
\hline
& $v=$123 & $v=$132 & $v=$213 & $v=$231 & \ldots \\
\hline
$E(X_{123}^P)_v$ & $1$ & 0 & 0 & 0 & \ldots \\
$E(X_{132}^P)_v$ & $\delta(\frac{z_3}{z_2},\frac{\mu_3}{\mu_2})$ & $\delta(\frac{z_2}{z_3},h)$ & 0 & 0 & \ldots \\
$E(X_{213}^P)_v$ & $\delta(\frac{z_2}{z_1},\frac{\mu_2}{\mu_1})$ & 0 &  $\delta(\frac{z_1}{z_2},h)$ & 0 & \ldots  \\
$E(X_{231}^P)_v$ & $\delta(\frac{z_2}{z_1},\frac{\mu_3}{\mu_1})\delta(\frac{z_3}{z_2},\frac{\mu_3}{\mu_2})$ &  $\delta(\frac{z_2}{z_1},\frac{\mu_3}{\mu_1})\delta(\frac{z_2}{z_3},h)$ 
& $\delta(\frac{z_3}{z_1},\frac{\mu_3}{\mu_2})\delta(\frac{z_1}{z_2},h)$ & $\delta(\frac{z_1}{z_2},h)\delta(\frac{z_1}{z_3},h)$  & \ldots  \\
$\vdots$  & $\vdots$  & $\vdots$  & $\vdots$  & $\vdots$  & $\ddots$ \\
\end{tabular}}
\end{example}


\begin{example} \rm \label{ex:GL3}
Let $G=\SL_4$ and $G/P$ be the Grassmannian of planes in $\C^4$. The cells, as well as the fixed points are indexed by the two-element subsets of $\{1,2,3,4\}$. 
The Weyl group $W_P\subset W=S_4$ is spanned by two transpositions $s_1$ and $s_3$. Let $\mu=\mu(\lambda)=h^{\langle\lambda,\varepsilon_2-\varepsilon_3\rangle}$ be the function in $\lambda\in(\t^*)^{W_P}\subset (\C^4)^*$ given by the exponent of the product with the dual root $\alpha_2^\vee=\varepsilon_2-\varepsilon_3$. 
The  restrictions of $E(X_w^P)$ are presented in the following table:
\medskip

{\small
$\def\arraystretch{1.5}\begin{array}{|c|ccccc}\hline&v=12&v=13&v=14&v=23&\!\!\dots\\
\hline
\!\!E(X_{12}^P)_v\!\!
&1&0&0&0&\!\!\\
\!\!E(X_{13}^P)_v\!\!
&\delta(\frac{z_3}{z_2},{\mu})&\delta(\frac{z_2}{z_3},h)&0&0&\!\!\dots\\
\!\!E(X_{14}^P)_v\!\!
&\!\!\delta(\frac{z_3}{z_4},h)\delta(\frac{z_4}{z_2},{\mu})+\delta(\frac{z_3}{z_2},{\mu})\delta(\frac{z_4}{z_3},\frac{\mu}{h})&\delta(\frac{z_2}{z_3},h)\delta(\frac{z_4}{z_3},\frac{\mu}{h})&\delta(\frac{z_2}{z_4},h)\delta(\frac{z_3}{z_4},h)&0&\!\!\dots\\
\!\!E(X_{23}^P)_v\!\!
&\!\!\delta(\frac{z_1}{z_2},h)\delta(\frac{z_3}{z_1},{\mu})+\delta(\frac{z_2}{z_1},\frac{\mu}{h})\delta(\frac{z_3}{z_2},{\mu})&\delta(\frac{z_2}{z_1},\frac{\mu}{h})\delta(\frac{z_2}{z_3},h)&0&\delta(\frac{z_1}{z_2},h)\delta(\frac{z_1}{z_3},h)&\!\!\dots\\
\vdots&\vdots&\vdots&\vdots&\vdots&\!\!\ddots\\\end{array}$
}

\noindent Comparing with the previous example, here the $\mu$-variable and $h$ may appear together
in one argument of $\delta$. This is due to the presence of the component $\rhoL$, which for $P=B$ vanishes.
\end{example}

\begin{example} \label{ex:Sp} \rm
Let $G=Sp_2$ and let $G/P=LG(2)$ be the Lagrangian Grassmannian of $2$-planes in $\C^4$. It is isomorphic to the quadric in $\PP^4$.
The Weyl group $W$ of $G$ is generated by the transposition $s_1=\begin{pmatrix}0&1\\1&0\end{pmatrix}$ and the sign change $s_2=\begin{pmatrix}1&0\\0&-1\end{pmatrix}$ under the $\{\varepsilon_1, \varepsilon_2\}$ basis as in \cite[Planche III]{Bou}.
The corresponding roots are 
$$\alpha_1=(1,-1),\qquad\alpha_2=(0,2).$$ 
With respect to the standard scalar product the coroots are the following
$$\alpha^\vee_1=(1,-1),\qquad\alpha^\vee_2=(0,1).$$
The weight $\rhoL$ appearing in our computation is equal to $(1,0)$.
The group $W_P$ is generated by $s_1$. 
There are four cells in $LG(2)$ corresponding to the words
$$1,\quad s_2,\quad s_1s_2,\quad s_2s_1s_2.$$ 
The weights, which are invariant with respect to $W_P$ are of the form $(\lambda,\lambda)$.
Let $\mu=h^{\lambda}$.
The elliptic class of the top dimensional Schubert variety restricted to $1$ is equal to
\begin{multline}\label{eqn:ESp}
E(X^P_{s_2s_1s_2})_{1}=\delta \left(\frac{1}{z_2^2},\frac{\mu}{h
   }\right)\delta
   \left(\frac{z_2}{z_1},\frac{ \mu
   ^2}h\right)\delta
   \left(\frac{1}{z_2^2},{\mu
   }\right)+
\\+
   \delta \left(z_2^2,h\right)\delta
   \left(\frac{1}{z_1 z_2},\frac{\mu
   ^2}h\right)\delta
   \left(\frac{1}{z_2^2},h\right)+\delta \left(\frac{1}{z_2^2},\frac{\mu}{h 
   }\right)\delta \left(\frac{z_1}{z_2},h\right)\delta
   \left(\frac{1}{z_1^2},{\mu }\right).\end{multline}
The first summand corresponds to the empty subword, the second one to $s_2s_2$, the third one to~$s_1$. See also Section \ref{sec:trans}.
\end{example}

\section{Elliptic classes of Schubert varieties in type $A$}
\subsection{Notation in type A} \label{sec:GLn_notations}
Let $G=\SL_n$. For convenience we consider the full group of linear transformations $\GL_n$ and  the maximal torus therein. Denote the standard basis of $\t=\C^n$ by $\varepsilon_i$. The simple roots, following standard convention as in \cite[Planche I]{Bou}, are  $\alpha_i=\varepsilon_i-\varepsilon_{i+1} \,(1\leq i \leq n-1)$. Fixing the standard scalar product in $\t_{\Q}=\Q^n$, we identify coroots with roots, that is $\alpha_i=\alpha_i^\vee$. The Weyl group $W=S_n$ is
  identified with the group of permutations of the set $\{1,2,\dots,n\}$, and also with the group of $n\times n$ permutation matrices. Let $\{s_i\}_{1\leq i\leq n-1}\subset W$ be the set of (simple) reflections corresponding to  the simple  roots $\alpha_i$. 

Consider the parabolic subgroup $P$ corresponding to the sequence of positive integers ${\bf k}= (k_1,k_2,\dots,k_m)$ with $\sum k_i=n$. The variety $G/P$ is the partial flag variety parametrizing flags of subspaces $(V_i)_{i=1,\dots,m}$ with $\dim V_i/V_{i-1}=k_i$. The Weyl group of the Levi factor is 
\[
W_P=S_{k_1}\times S_{k_2}\times\dots\times S_{k_m}\subset W=S_n.
\]
Set $k^{(s)}=\sum_{i=1}^s k_i
$ and $k^{(0)}=0$.
The simple roots of the Levi factor are those 
$
\alpha_i$ such that $\{i,i+1\}\subset[k^{(s-1)}+1, k^{(s)}]$ for some $s\in[1,m].
$
 The defining condition 
$$\langle \rhoL,\alpha_i^\vee\rangle=\begin{cases}1 &\text{if } s_i\in W_P\\
0 &\text{if } s_i\not\in W_P\end{cases}$$
of $\rhoL$ translates to the formula
$\rhoL=(r_1,r_2,\dots,r_n),$ where, for $1\leq i\leq n-1$, 
$$r_i-r_{i+1}=\begin{cases}1&\text{if }\exists s\in[1,\ m]\;\;\text{with}\,\,\{i,i+1\}\subset[k^{(s-1)}+1, k^{(s)}]\\
0&\text{otherwise}.\end{cases}$$
The weight $\rhoL$ is determined by this condition up to the addition of $\mathbb{Z}\Delta$, where $\Delta := (1, \dots, 1)$. For example, if $(k_1,k_2,k_3)=(2,3,1)$, then $\rhoL=(4,3,3,2,1,1)$ or equally well we can take
$\rhoL=(3,2,2,1,0,0)$.

The number $\langle \om\rhoL,\alpha_i^\vee\rangle$ is crucial for our computations. In type $A$ it is rewritten as 
\begin{equation}\label{eq:crucial}
\langle \rhoL,\om^{-1}\alpha_i^\vee\rangle=r_{\om^{-1}(i)}-r_{\om^{-1}(i+1)}.
\end{equation}

\subsection{The recursion for $E(X^P_\om,\lambda)$ in type $A$}
Let us denote the basis characters $\T=(\C^*)^n\to \C^*$ by $z_i=e^{\varepsilon_i^*}$. The exponential of the simple root $\alpha_i$ is hence 
$$e^{\alpha_i} =\frac{z_i}{z_{i+1}}\qquad\text{for }i=1,2,\ldots,n-1.$$
Let $y_i=h^{-\varepsilon_i}$. It is treated as a function on $\t^*=\C^n	$: for $\lambda=(\lambda_1,\lambda_2,\dots,\lambda_n),$
\begin{equation}\label{defy}
y_i(\lambda)= h^{-\langle\lambda,\varepsilon_i\rangle}=h^{-\lambda_i}.
\end{equation}
With this notation, and using \eqref{eq:crucial}, we obtain that the recursion of Theorem \ref{thm:Erecursion} in type $A$ takes the following form for $v, w\in W^P$ and a simple reflection $s_i$ such that $s_iw\in W^P$ and $s_iw> w$:
\begin{equation}\label{recursion}
E(X^P_{s_{i}\om})_{[v]}=\delta\!\left(\frac{z_{i+1}}{z_i},
\frac{h^{r_{\om^{-1}(i+1)}}\,y_{\om^{-1}(i+1)}}{h^{r_{\om^{-1}(i)}}\,y_{\om^{-1}(i)}}\right) 
\cdot E(X^P_\om)_{[v]} + 
\delta\!\left(\frac{z_{i}}{z_{i+1}},h\right)\cdot s_i^z
[E(X^P_w)_{[s_iv]}],\end{equation}
where $s_i^z[ f(\ldots,z_i,z_{i+1},\ldots)]=f(\ldots,z_{i+1},z_i,\ldots)$, and the initial condition takes the form
\begin{equation}\label{eqn:Einitial}
E(X^P_{1},\lambda)_v=
\begin{cases}
1 & \text{if }  v=1 \\
0 & \text{if } v\not= 1.
\end{cases}
\end{equation}

\section{Weight functions of \cite{RTV} represent elliptic characteristic classes}\label{sec:weight}

We first introduce a class of special functions called {\it elliptic weight functions}. Then, we show that after a certain shift of variables they represent elliptic classes $E(X^P_\om,\lambda)$ in type $A_{n-1}$.

\subsection{Elliptic weight functions}

As in Subsection \ref{sec:GLn_notations}, we have ${\bf k}= (k_1,\ldots,k_m)\in \Z^m_{\geq 1}$,  $k^{(s)}=\sum_{i=1}^s k_i$ and $n=k^{(m)}=\sum_{i=1}^m  k_i$. The corresponding partial flag variety $\SL_n/P$ parametrizes flags of  subspaces $\{V_i\}_{i=1,\ldots,m}$ with $\dim V_i=k^{(i)}$. The set of cosets $W/W_P$ (in fact,  the set $W^P$) is in natural bijection with the set of partitions $I=(I_1,\ldots,I_m)$ of $\{1,\ldots,n\}$ with $|I_i|=k_i$. For such an $I$ we  use the notation $\cup_{i=1}^s I_i=\{i^{(s)}_1<\ldots<i^{(s)}_{k^{(s)}}\}$. 

Consider the set of variables $t^{(s)}_i$ for $s=1,\ldots, m$, $i=1,\ldots,k^{(s)}$, and set $t^{(m)}_i=z_i$. 
Following \cite{RTV} define the {\em elliptic weight function} by
\[
\W_I=
\frac{1}{\prod_{s=1}^{m-1} \prod_{i=1}^{k^{(s)}} \prod_{j=1}^{k^{(s)}} \vt(ht^{(s)}_j/t^{(s)}_i)} \cdot
\Sym_{t^{(1)}} \ldots \Sym_{t^{(m-1)}} (U_I),
\]
where $\Sym_{t^{(s)}}$ is symmetrization with respect to the  $t^{(s)}_1,\ldots,t^{(s)}_{k^{(s)}}$ variables, and 
\[
U_I=\prod_{s=1}^{m-1} \prod_{a=1}^{k^{(s)}}
       \left(
          \prod_{c=1}^{k^{(s+1)}} \psi_{I,s,a,c}(t^{(s+1)}_c/t^{(s)}_a) 
          \prod_{b=a+1}^{k^{(s)}} \frac{ \vt(ht^{(s)}_b/t^{(s)}_a)}{ \vt(t^{(s)}_b/t^{(s)}_a)}
       \right),
\]
where
\[
\psi_{I,s,a,c}(x)=\vt(x)\cdot 
     \begin{cases}
		\delta(x,h) & \text{if }\  i^{(s+1)}_c<i^{(s)}_a \\
		\delta(x,h^{1+p_{I,j(I,s,a)}(i^{(s)}_a)-p_{I,s+1}(i^{(s)}_a)}\mu_a/\mu_b) & \text{if }\  i^{(s+1)}_c=i^{(s)}_a\\
		1 & \text{if }\  i^{(s+1)}_c>i^{(s)}_a. \\
	\end{cases}
\]
Here $\mu_a:= h^{-\lambda_a}$ and we used the numerical functions 
\begin{itemize}
\item $j(I,s,a)$ is defined by $i^{(s)}_a\in I_{j(I,s,a)}$;
\item $p_{I,j}(i)=| I_j \cap \{1,\ldots,i-1\}|$.
\end{itemize}

For example, for ${\bf k}=(1,1)$ we have (temporarily denoting $t=t^{(1)}_1$)
\begin{equation}\label{n2}
\W_{\{1\},\{2\}}=\frac{ \vt(z_1t^{-1}h\mu_2\mu_1^{-1}) \vt(z_2t^{-1})}{\vt(h\mu_2\mu_1^{-1})},
\qquad
\W_{\{2\},\{1\}}=\frac{ \vt(z_1t^{-1}h) \vt(z_2t^{-1}\mu_2\mu_1^{-1})}{\vt(h)\vt(\mu_2\mu_1^{-1})}.
\end{equation}

More important than the actual formula above---we admit that it is terribly complicated at the first sight---is the recursion for the weight functions phrased in the next two propositions. Recall that the $s_i^z$ operator switches the variables $z_i$ and $z_{i+1}$. The operator $s_i$ acts on a partition $I$ by replacing the numbers $i$ and $i+1$.
 
\begin{proposition}[R-matrix recursion for weight functions]
Assume that for $i\in I_a$, $i+1\in I_b$ we have $a<b$. Then
\begin{equation}\label{RrecursionW}
\W_{s_{i}(I)}= 
   \delta\!\left(\frac{z_{i+1}}{z_{i}},\frac{\mu_b h^{p_{I,a}(i)}}{\mu_a h^{p_{I,b}(i+1)}}\right) \cdot \W_I 
   +
   \delta\!\left(\frac{z_i}{z_{i+1}},h\right) \cdot s^z_{i}[\W_I].
\end{equation}
\end{proposition}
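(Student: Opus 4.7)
My plan is to prove the recursion \eqref{RrecursionW} by direct manipulation of the weight function formula, using the crucial fact that $z_i=t^{(m)}_i$ and $z_{i+1}=t^{(m)}_{i+1}$ enter $\W_I$ only through the outermost ($s=m-1$) layer of the $\psi$-factors.

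The first step is to localize the identity to this layer. The prefactor $\prod_{s=1}^{m-1}\prod_{i,j=1}^{k^{(s)}}\vt(ht^{(s)}_j/t^{(s)}_i)^{-1}$ depends only on the dummy $t$-variables, so it is manifestly invariant under both $s_i^z$ and the relabeling $I\mapsto s_i(I)$. For the $U_I$ factor, the contributions from levels $s<m-1$ depend on $I$ only through the subsets $\cup_{j\leq s+1} I_j$ and their block memberships; when $a<b<m$ these subsets are set-theoretically unchanged by $s_i$, while when $b=m$ any discrepancy in $\cup_{j<m} I_j$ is absorbed after the inner $\Sym_{t^{(s)}}$-symmetrizations. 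Thus the recursion reduces, after $\Sym_{t^{(m-1)}}$, to a $(z_i,z_{i+1})$-local identity inside
\[
\prod_{a=1}^{k^{(m-1)}}\psi_{I,m-1,a,i}\!\left(\frac{z_i}{t^{(m-1)}_a}\right)\psi_{I,m-1,a,i+1}\!\left(\frac{z_{i+1}}{t^{(m-1)}_a}\right).
\]

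Next, one performs a case analysis on each index $a$: the formula for $\psi_{I,m-1,a,c}$ splits according to whether $c<$, $=$, or $>i^{(m-1)}_a$, and the transformation $I\mapsto s_i(I)$ reshuffles only those factors for which $i^{(m-1)}_a\in\{i,i+1\}$, shifting the $\delta$-argument via the transposition $p_{I,a}(i)\leftrightarrow p_{I,b}(i+1)$. The resulting two-term identity then follows from the Riemann/Jacobi three-term theta relation
\[
\vt(xy)\vt(x/y)\vt(uv)\vt(u/v)-\vt(xv)\vt(x/v)\vt(uy)\vt(u/y)=\vt(xu)\vt(x/u)\vt(yv)\vt(y/v),
\]
applied with $x,y,u,v$ taken from $z_i^{1/2},z_{i+1}^{1/2}$ and shift parameters built from $h^{1/2}$ and $(\mu_b/\mu_a)^{1/2}h^{(p_{I,a}(i)-p_{I,b}(i+1))/2}$; this produces exactly the two coefficients $\delta(z_{i+1}/z_i,\mu_b h^{p_{I,a}(i)}/(\mu_a h^{p_{I,b}(i+1)}))$ and $\delta(z_i/z_{i+1},h)$ on the right-hand side of \eqref{RrecursionW}.

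The main obstacle will be the careful combinatorial bookkeeping: the exponents $p_{I,j(I,s,a)}(i^{(s)}_a)-p_{I,s+1}(i^{(s)}_a)$ appearing in the $\delta$-case of the $\psi$-definition transform intricately under $I\mapsto s_i(I)$, and one must verify that their net contribution in the $s=m-1$ layer is precisely the dynamical shift on the right-hand side. The subcase $b=m$ is the most delicate, since then $i$ and $i+1$ straddle the boundary between the $(m-1)$-layer and the $z$-variables; here the $\Sym_{t^{(m-1)}}$-symmetrization must be invoked explicitly to reconcile the change of sets $\cup_{j<m}I_j$ with the two-term form of the recursion.
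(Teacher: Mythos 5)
The paper does not actually prove this proposition from scratch: it cites \cite[Thm.\ 2.2, eq.\ (2.18)]{RTV}, remarks that the present $\W_I$ differs from the weight functions there only by global factors and powers of $\vt(h)$, and derives \eqref{RrecursionW} by specializing $\sigma=1$, renaming $I$, and rearranging. Your proposal instead attempts a direct proof from the defining formula, which is a genuinely different route, so it needs to stand on its own.

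The direct route has a real gap in the localization step. You assert that for $a<b<m$ the sets $\cup_{j\le s+1}I_j$ entering the inner layers are set-theoretically unchanged by $s_i$, so that only the $s=m-1$ layer of $\psi$-factors matters. This is false: for every $s$ with $a\le s+1<b$ the set $\cup_{j\le s+1}I_j$ contains $i$ but not $i+1$, and after $I\mapsto s_i(I)$ it contains $i+1$ but not $i$. Consequently the sorted entries $i^{(s)}_\bullet$ and $i^{(s+1)}_\bullet$ change (they trade $i$ for $i+1$ at one position), and since the case split in $\psi_{I,s,a,c}$ is governed by the comparison of $i^{(s+1)}_c$ with $i^{(s)}_a$, those factors \emph{do} react to $s_i$. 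The ``crossing'' level $s=b-1$ is especially problematic: there one has $i^{(s)}_{a'}=i$ and $i^{(s+1)}_c=i+1$ for suitable $a',c$, and the transposition flips that comparison, changing which branch of $\psi$ applies; in addition the exponents $p_{I,j(I,s,a)}(i^{(s)}_a)$ and $p_{I,s+1}(i^{(s)}_a)$ shift when $j\in\{a,b\}$. None of this is confined to $s=m-1$, and the inner $\Sym_{t^{(s)}}$ symmetrizations do not by themselves ``absorb'' the change of block memberships (the permuted objects are the $t^{(s)}$-variables, not the indices $i^{(s)}_\bullet$). So the reduction to a two-factor $(z_i,z_{i+1})$-local identity, and with it the appeal to the three-term theta relation, is not justified as stated. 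A correct direct proof (as in \cite{RTV}) must track the effect of $s_i$ across all levels $s$ with $a\le s+1\le b$, not just the outer one; alternatively, you should follow the paper and simply invoke \cite[Thm.\ 2.2]{RTV} after reconciling normalizations.
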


\begin{proof}
This is, in fact, not a new result. The weight functions defined in \cite[Section 2.4]{RTV} only differ from ours by some irrelevant power of $\vt(h)$, and some global factors. For the weight functions defined there, an R-matrix recursion is proved there in Theorem 2.2(2.18). Applying that formula for $\sigma= 1$, renaming $I$ to $s_{i}(I)$, and rearranging, we arrive at \eqref{RrecursionW}.
\end{proof}

For a function $f$ in the variables $t^{(s)}_i$ (e.g. a weight function), and $I\in W/W_P$ let $f|_{I}$ be the function obtained from $f$ by substituting
\[
t^{(s)}_j \ \mapsto \  z_{i^{(s)}_j} \qquad\  \text{for } s=1,\ldots, m-1, j=1,\ldots, k^{(s)}.
\]
Let $I^0$ be the ``smallest'' $I$, that is $I^0_1=\{1,2,\ldots,k_1\}$, $I^0_2=\{k_1+1,\ldots,k_1+k_2\}$, etc. 
\begin{proposition}\label{prop:Winitial}
We have
\[
{\W_{I^0}|}_{J}=
\begin{cases}
\prod_{1\leq a<b\leq m} \prod_{i\in I^0_a} \prod_{j\in I^0_b} \vt(z_j/z_i) & \text{if } J=I^0 \\
0 & \text{if } J\not= I^0.
\end{cases}
\]
\end{proposition}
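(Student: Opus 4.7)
The plan is to compute $\W_{I^0}|_J$ directly from the definition, exploiting the $\vt$-prefactor carried by every $\psi_{I^0,s,a,c}$. For $I=I^0$ one has $i^{(s)}_a=a$, so $\psi_{I^0,s,a,c}(x)$ equals $\vt(x)$ if $c>a$ (hence vanishes at $x=1$), while for $c\le a$ the factor $\psi_{I^0,s,a,c}(x)$ is regular and nonzero at $x=1$ (since $\vt(x)\cdot\delta(x,y)=\vt(xy)\vt'(1)/\vt(y)$ in the $c=a$ case, and similarly for $c<a$). Substituting $t^{(s)}_j\mapsto z_{j^{(s)}_j(J)}$ and expanding the symmetrizations, $\W_{I^0}|_J$ becomes a sum over tuples of bijections $\tau_s\colon\{1,\ldots,k^{(s)}\}\to N^{(s)}(J)$ with $N^{(s)}(J):=J_1\cup\cdots\cup J_s$ and $\tau_m=\id$ (since $t^{(m)}_j=z_j$ is not symmetrized). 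A summand is nonzero only when no prefactor $\vt(t^{(s+1)}_c/t^{(s)}_a)$ with $c>a$ is killed by the substitution.

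Since $N^{(s)}(J)\subseteq N^{(s+1)}(J)$, for each $i\in N^{(s)}(J)$ there are unique $a,c$ with $\tau_s(a)=\tau_{s+1}(c)=i$, and the non-vanishing condition reads $\tau_{s+1}^{-1}(i)\le\tau_s^{-1}(i)$ for every $i\in N^{(s)}(J)$ and $1\le s\le m-1$. Summing over $i\in N^{(s)}(J)$, the right-hand side is $1+2+\cdots+k^{(s)}$ (because $\tau_s^{-1}$ is a bijection onto $\{1,\ldots,k^{(s)}\}$), while the left-hand side is a sum of $k^{(s)}$ distinct elements of $\{1,\ldots,k^{(s+1)}\}$ and thus at least $1+\cdots+k^{(s)}$. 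Equality throughout forces $\tau_{s+1}(\{1,\ldots,k^{(s)}\})=N^{(s)}(J)$ together with the pointwise equalities $\tau_{s+1}^{-1}(i)=\tau_s^{-1}(i)$ on $N^{(s)}(J)$. Applying this with $s=m-1$ (and $\tau_m=\id$) yields $N^{(m-1)}(J)=\{1,\ldots,k^{(m-1)}\}$ and $\tau_{m-1}=\id$; descending induction on $s$ then gives $N^{(s)}(J)=\{1,\ldots,k^{(s)}\}$ and $\tau_s=\id$ for every $s$, i.e.\ $J=I^0$. Hence $\W_{I^0}|_J=0$ whenever $J\ne I^0$.

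For $J=I^0$ only the substitution $t^{(s)}_j=z_j$ contributes. Plugging it into $U_{I^0}$ and dividing by $\prod_s\prod_{i,j=1}^{k^{(s)}}\vt(hz_j/z_i)$, I will split the factors indexed by $(s,a,c)$ according to whether $c\le k^{(s)}$ or $k^{(s)}<c\le k^{(s+1)}$. The within-block piece (the $c\le a$ cases of $\psi_{I^0,s,a,c}$, the $c>a$ factors with $c\le k^{(s)}$, and the auxiliary product $\prod_{a<b\le k^{(s)}}\vt(hz_b/z_a)/\vt(z_b/z_a)$) is supported entirely on $z_1,\ldots,z_{k^{(s)}}$ and, after accounting for the scalar prefactors $\vt'(1)$, $\vt(h)$ and the $\vt$-values at $h^e\mu_a/\mu_b$ generated by the $\delta$'s, collapses against the level-$s$ denominator factor $\prod_{i,j=1}^{k^{(s)}}\vt(hz_j/z_i)$. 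What remains are the across-block contributions $\prod_{a\le k^{(s)}<c\le k^{(s+1)}}\vt(z_c/z_a)$, whose product over $s=1,\ldots,m-1$ enumerates exactly the pairs $(i,j)$ with $i<j$ in distinct blocks of $I^0$, giving $\prod_{a<b}\prod_{i\in I^0_a,\,j\in I^0_b}\vt(z_j/z_i)$. The main obstacle is this final bookkeeping: tracking all scalar factors carefully enough to confirm the claimed unit cancellation, especially the interplay between the $\vt(h)$'s from diagonal ($i=j$) denominator entries and the $\vt'(1)/\vt(h)$ coming from each $c<a$ factor.
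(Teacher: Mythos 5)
Your vanishing argument (the case $J\ne I^0$) is complete, correct, and substantially more explicit than the paper's own proof, which merely cites Lemmas~2.4--2.5 of \cite{RTV} ``or careful inspection.'' Reorganizing the post-substitution symmetrization as a sum over tuples of bijections $\tau_s\colon\{1,\ldots,k^{(s)}\}\to N^{(s)}(J)$ with $\tau_m=\id$, observing that the only $\vt$-prefactors that can vanish are those with $c>a$ (since the $z_i$ are independent and $\tau_s,\tau_{s+1}$ are injective, a zero occurs exactly when $\tau_{s+1}(c)=\tau_s(a)$), and then combining the pointwise inequality $\tau_{s+1}^{-1}(i)\le\tau_s^{-1}(i)$ with the counting bound $\sum_{i\in N^{(s)}(J)}\tau_{s+1}^{-1}(i)\ge 1+\cdots+k^{(s)}=\sum_{i}\tau_s^{-1}(i)$ is a clean, self-contained derivation that forces $J=I^0$ and $\tau_s=\id$. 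This is a genuine improvement in rigor over the paper's pointer.

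The $J=I^0$ evaluation, however, is only outlined, and the ``final bookkeeping'' you flag is where the argument actually does not go through as stated. Carrying the within-block product through literally with the paper's conventions, each $c<a$ factor $\vt(x)\delta(x,h)$ becomes $\vt(hz_c/z_a)\,\vt'(1)/\vt(h)$ and each $c=a$ factor becomes $\vt'(1)$; after cancelling the $\vt(z_c/z_a)$'s against the auxiliary product $\prod_{b>a}\vt(hz_b/z_a)/\vt(z_b/z_a)$, the level-$s$ within-block piece divided by $\prod_{i,j=1}^{k^{(s)}}\vt(hz_j/z_i)$ collapses not to $1$ but to $\bigl(\vt'(1)/\vt(h)\bigr)^{k^{(s)}(k^{(s)}+1)/2}$. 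The same discrepancy is already visible in the paper's own formula \eqref{n2}, which, read against the displayed definition of $\psi$, is silently off by a factor $\vt'(1)/\vt(h)$. So either one must work in a normalization where $\vt'(1)=\vt(h)$ (which is not the stated normalization of $\vt$), or the proposition is to be read modulo the ``irrelevant global factors'' the paper invokes when comparing with \cite{RTV}. To close the argument you need to resolve this convention explicitly and record the resulting scalar; as written, asserting the unit cancellation is the one step that fails under the literal definitions.
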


\begin{proof} 
The statement follows from \cite[Lemmas 2.4, 2.5]{RTV}, or by careful inspection of the formula for the weight function. (The reader is advised to verify the statement by substituting $t=z_1$ or $t=z_2$ in the formula $W_{\{1\},\{2\}}$ in \eqref{n2}, the general case only differs by tracing indexes).
\end{proof}

\subsection{Weight functions versus elliptic classes}
The variables of the weight function $\W_I$ are $t^{(s)}_i$, $z_i$, $\mu_i$, $h$. 
The elliptic class $E(X^P_\om,\lambda)$ lives in the $\T=(\C^*)^n$ equivariant $K$-theory of $G/P$ extended by variables $h$ and $y_j$ (see \eqref{defy}). 

Recall that the partial flag variety $G/P$ parametrizes nested subspaces $V_s$ of dimension $k^{(s)}$. Let the tautological bundle over $G/P$ whose fiber is $V_s$ be denoted by $\TTT_s$. Then, $\TTT_s$ represents an element in $K_{\T}(G/P)$.


Consider the following evaluation of the variables of $\W_I$:
\begin{equation}\label{eq:subs}
\begin{tabular}{lcl}
$t^{(s)}_i$ & $\mapsto$ & \gr roots of $\TTT_s$  \\
$z_i$ & $\mapsto$ & \gr roots of the tautological $n$-bundle over the classifying space $B\T$ \\
$h$ & $\mapsto$ & $h$ \\
$\mu_s$ & $\mapsto$ & $y_j \cdot h^{s-k^{(s-1)}}$ where $j\in [k^{(s-1)}+1,k^{(s)}]$.
\end{tabular}
\end{equation}
Note that the last substitution makes sense, since if $j,j'\in [k^{(s-1)}+1,k^{(s)}]$ then $y_j(\lambda)=y_{j'}(\lambda)$ for $\lambda\in (\t^*)^{W_P}$. 

\begin{theorem} For any $I\in W^P$, the evaluation \eqref{eq:subs} of $\W_I/e^E(T(G/P))$ represents $E(X^P_I,\lambda)$.
\end{theorem}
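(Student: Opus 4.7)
The plan is to induct on $\ell(w)$, where $w\in W^P$ is the representative of the coset $I$. Both $E(X^P_I,\lambda)_{[J]}$ and $(\W_I/e^E(T(G/P)))|_J$ are indexed by pairs $I,J\in W^P$ (here $|_J$ denotes the further specialization $t^{(s)}_j\mapsto z_{i^{(s)}_j(J)}$ on top of \eqref{eq:subs}); the strategy is to show these two families satisfy the same initial condition and the same simple-reflection recursion. By Theorem \ref{thm:Erecursion} (in the form \eqref{recursion}) together with Lemma \ref{thm:Einitial}, these data determine the fixed-point restrictions $E(X^P_I,\lambda)_{[J]}$ uniquely, so equality for all $I,J$ follows.

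For the base case $I=I^0$ (equivalently $w=e$), Lemma \ref{thm:Einitial} gives $E(X^P_1,\lambda)_{[J]}=\delta_{J,I^0}$. Proposition \ref{prop:Winitial} gives $\W_{I^0}|_J=0$ for $J\neq I^0$ and $\W_{I^0}|_{I^0}=\prod_{a<b}\prod_{i\in I^0_a,\,j\in I^0_b}\vt(z_j/z_i)$. The latter product is exactly $e^E(T_{I^0}(G/P))$, since the $\T$-weights of $T_{I^0}(G/P)$ are precisely the roots $z_j/z_i$ with $i\in I^0_a, j\in I^0_b, a<b$. Thus the quotient equals $\delta_{J,I^0}$, matching the elliptic side.

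For the inductive step, we write $I=s_i(I')$ with $i\in I'_a$, $i+1\in I'_b$, $a<b$; let $w,w'\in W^P$ be the corresponding representatives, so $w=s_iw'$ and $\ell(w)=\ell(w')+1$. Restrict the R-matrix recursion \eqref{RrecursionW} to $J$ and divide by $e^E(T(G/P))|_J$. Two auxiliary identities are needed: (i) $s_i^z[\W_{I'}]|_J=s_i^z[\W_{I'}|_{s_iJ}]$, since the substitution $t^{(s)}_j\mapsto z_{i^{(s)}_j}$ at the fixed point $s_iJ$ differs from that at $J$ by the transposition $z_i\leftrightarrow z_{i+1}$; (ii) $s_i^z[e^E(T(G/P))|_{s_iJ}]=e^E(T(G/P))|_J$, because the tangent weights at $J$ and $s_iJ$ are related by the same transposition. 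Combined, these turn \eqref{RrecursionW}$/e^E|_J$ into a recursion of the same shape as \eqref{recursion} for $(\W_I/e^E)|_J$, reducing the proof to matching the two $\delta$-arguments.

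The key calculation is therefore to show that $\mu_b h^{p_{I',a}(i)}/(\mu_a h^{p_{I',b}(i+1)})$ from \eqref{RrecursionW} is sent to $h^{r_{w'^{-1}(i+1)}}y_{w'^{-1}(i+1)}/(h^{r_{w'^{-1}(i)}}y_{w'^{-1}(i)})$ from \eqref{recursion} under \eqref{eq:subs}. Since $i\in I'_a$ forces $w'^{-1}(i)\in[k^{(a-1)}+1, k^{(a)}]$, the substitution gives $\mu_a\mapsto y_{w'^{-1}(i)}h^{a-k^{(a-1)}}$, and similarly $\mu_b\mapsto y_{w'^{-1}(i+1)}h^{b-k^{(b-1)}}$. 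Because $w'\in W^P$ is order-preserving on each Levi block, $p_{I',a}(i)=w'^{-1}(i)-k^{(a-1)}-1$ and $p_{I',b}(i+1)=w'^{-1}(i+1)-k^{(b-1)}-1$. Plugging in, the $h$-exponent of $\mu_b h^{p_{I',a}(i)}/(\mu_a h^{p_{I',b}(i+1)})$ collapses to $b-a+w'^{-1}(i)-w'^{-1}(i+1)$; the explicit form of $\rhoL$ in type $A$ (constant across Levi blocks, strictly decreasing by $1$ within each block) yields $r_{w'^{-1}(i+1)}-r_{w'^{-1}(i)}=b-a+w'^{-1}(i)-w'^{-1}(i+1)$, closing the induction. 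The main obstacle is precisely this combinatorial bookkeeping: correctly translating the counting function $p_{I',a}$ into a difference of $w'^{-1}$-values and reading the values of $r$ off the block structure encoded in $\rhoL$.
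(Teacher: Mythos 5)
Your proposal matches the paper's proof essentially step for step: induction on $\ell(w)$, base case via Proposition \ref{prop:Winitial} and the initial condition \eqref{eqn:Einitial}, the two symmetry identities $(s_i^z[\W_{I'}])|_J=s_i^z[\W_{I'}|_{s_iJ}]$ and $s_i^z[e_{s_iJ}]=e_J$, and finally the combinatorial identification $p_{I',a}(i)=w'^{-1}(i)-k^{(a-1)}-1$ combined with $r_j=a_j-j$ (up to a constant) to match the $\delta$-arguments of \eqref{RrecursionW} and \eqref{recursion} under the substitution \eqref{eq:subs}. The argument is correct and is the same as the paper's.
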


In other words the evaluation of the weight function $\W_I$ is the $\Ellt^E$-class of the pair \eqref{eqn:pair}.

\begin{proof}
Introducing the notation $e_I=e^E(T(G/P))|_{I}$, from the known description of the tangent space of partial flag varieties we obtain
\[
e_I=\prod_{1\leq a<b\leq n} \,\prod_{i\in I_a} \prod_{j\in I_b} \vt(z_j/z_i).
\]
With this notation we need to show that 
\begin{equation}\label{toprove}
\frac{\W_I|_{_J} }{ e_J} = E(X_I^P,\lambda)_J
\end{equation}
for all $I$ and $J$, which we will  prove by induction on the length of $I$. 
For $I^0$ \eqref{toprove} follows from the comparison of \eqref{eqn:Einitial} and Proposition \ref{prop:Winitial}.

Now, assume that for $i\in I_a$, $i+1\in I_b$ we have $a<b$.  Then, from \eqref{RrecursionW}, for all $J$ we obtain
\begin{equation*}\label{RrecursionWJ}
\W_{s_{i}(I)}|_{J}= 
   \delta\!\left(\frac{z_{i+1}}{z_{i}},\frac{\mu_b h^{p_{I,a}(i)}}{\mu_a h^{p_{I,b}(i+1)}}\right) \cdot \W_I|_J
   +
   \delta\!\left(\frac{z_i}{z_{i+1}},h\right) \cdot \left( s^z_{i}[\W_I] \right)|_J.
\end{equation*}
Using $\left( s^z_{i}[\W_I] \right)|_J=s_i^z[ {\W_I}|_{s_i(J)}]$, and temporarily denoting the left hand side of \eqref{toprove} by $E'(X^P_I)_J$, we can write
\begin{multline*}
E'(X^P_{s_i(I)})_J \cdot e_J=\\
   \delta\!\left(\frac{z_{i+1}}{z_{i}},\frac{\mu_b h^{p_{I,a}(i)}}{\mu_a h^{p_{I,b}(i+1)}}\right) \cdot E'(X^P_I)_J \cdot e_J
   +
   \delta\!\left(\frac{z_i}{z_{i+1}},h\right) \cdot  s_i^z[  E'(X^P_I)_{s_i(J)} ] s_i^z[e_{s_i(J)}].
\end{multline*}
Remarkably, from the explicit formula for $e_I$ we can see that $s_i^z[e_{s_i(J)})]=e_J$. Hence, after division by $e_J$, we arrive at
\begin{equation}\label{almost}
E'(X^P_{s_i(I)})_J =\\
   \delta\!\left(\frac{z_{i+1}}{z_{i}},\frac{\mu_b h^{p_{I,a}(i)}}{\mu_a h^{p_{I,b}(i+1)}}\right) \cdot E'(X^P_I)_J 
   +
   \delta\!\left(\frac{z_i}{z_{i+1}},h\right) \cdot  s_i^z[  E'(X^P_I)_{s_i(J)} ].
\end{equation}
We claim that this recursion is the same as the recursion for $E(X^P_I)_J$ given in \eqref{recursion}, which will complete our proof. Hence, we only need to identify the coefficient of $E(X_I)_J$ in \eqref{recursion} with the coefficient of $E'(X_I)_J$ in \eqref{almost}---after the substitution \eqref{eq:subs}. That is, we need the combinatorial statement 
\[
r_{\om^{-1}(i+1)}-r_{\om^{-1}(i)}=(b-k^{(b-1)}+p_{I,a}(i))-(a-k^{(a-1)}+p_{I,b}(i+1)),
\]
or equivalently, that the quantity
$$p_{I,a}(i)+
r_{\om^{-1}(i)}+
k^{(a-1)}-a
$$
does not depend on $i$ ($a$ is determined by $i$ via $i\in I_a$).
Tracing back the definitions of these combinatorial functions we see that 
\begin{itemize}
\item $p_{I, a}(i)+k^{(a-1)}+1=w^{-1}(i)$, and
\item $a-w^{-1}(i)$ works for a choice of $r_{w^{-1}(i)}$ (recall from Section \ref{sec:GLn_notations} that $r_j$'s are only defined up to a uniform scalar addition).
\end{itemize}
From these two claims, by cancelling $w^{-1}(i)$, we obtain that $p_{I, a}(i) + k^{(a-1)} + r_{w^{-1}(i)} - a = -1$, that is, a number independent of $i$. This completes the proof. 
\end{proof}

\begin{example} \rm
Let $k=(2,3,2)$, and choose $\om=[i_1,i_2,\ldots,i_7]\in W^P$. For the corresponding $I=(\{i_1,i_2\},\{i_3,i_4,i_5\},\{i_6,i_7\})$ the various combinatorial functions 

\centerline{
\begin{tabular}{c|cc|ccc|cc}
$i$ & $i_1$ & $i_2$ & $i_3$ & $i_4$ & $i_5$ & $i_6$ & $i_7$ \\ \hline
$w^{-1}(i)$ & 1 & 2 & 3 & 4 & 5 & 6 & 7 \\
$a$ & 1 & 1 & 2 & 2 & 2 & 3 & 3\\
$r_{w^{-1}(i)}$ & 0 & -1 & -1 & -2 & -3 & -3 & -4 \\
$k^{(a-1)}$ & 0 & 0 & 2 & 2& 2 & 5 & 5\\
$p_{I, a}(i)$ & 0 & 1& 0 & 1 & 2 & 0 & 1\\
\end{tabular}
}

\noindent illustrate the last, combinatorial, part of the proof above, namely the identity $p_{I, a}(i) + k^{(a-1)} + r_{w^{-1}(i)} - a = -1$. 
\end{example}

\section{Remarks}

\subsection{Transformation properties}\label{sec:trans}
Equivariant elliptic cohomology classes of a point can be regarded as sections of certain line bundles over some products of elliptic curves. Hence, the function $E(X_w^P)_v$ can be regarded as a section of a  line bundle (depending on $G, P, w, v$) over a product of elliptic curves. For example, the function \eqref{eqn:ESp} can be regarded as a section of a line bundle over $E^4$, where $E=\C^*/(q^{\Z})$, $q=e^{2\pi i \tau}$, and the coordinates of the factors of $E^4$ are $z_1, z_2, h, \mu$.

By {\em transformation property} we mean the following: To a product of theta functions we associate a quadratic form as follows: to $\vt(\prod_{i=1}^p x_i^{r_i} )$ associate $(\sum_{i=1}^p r_ix_i)^2$, and to a product of $\vt$-functions associate the sum of the quadratic forms of each factor. For a more conceptual explanation see \cite[Section 5]{FRV18}. 
For example, the quadratic forms associated to the three terms of the function \eqref{eqn:ESp} are (up to the same scalar multiple)
\[-2 z_2( \mu-h) +  (z_2 - z_1) ( 2 \mu-h)+ (-2  z_2)\mu,\]
\[(-z_1 - z_2)( 2 \mu-h)\qquad\text{and}\qquad     - 2 z_2( \mu-h)+  (z_1 - z_2) h+(-2 z_1)\mu.\]
The reader can trivially  verify that these three quadratic forms are all equal. 

The general fact that the different summands of $E(X_w^P)_v$ must have the same {transformation property} (i.e., the same associated quadratic form) is a useful practical reality check in calculations.  

\subsection{Axiomatic characterization}
The fact that \charr classes of Schubert (or other geometrically relevant) varieties can be described by axioms turned out useful in several parts of enumerative geometry. Such axiomatic characterizations were initially known for the cohomological fundamental class \cite{Rinv}, but,  after Okounkov's works, such axiomatic characterizations are proved for the cohomological CSM classes and for the $K$-theoretic MC classes as well \cite{RV, FR, FRW1, FRW2}. 

It can be shown that the elliptic classes of Schubert varieties studied in this paper have an axiomatic characterization, too. However, no argument in this paper relies on such characterization, and, in fact, even phrasing the axioms precisely would be rather technical. Hence, here we only {\em sketch} the axiomatic characterization briefly.

\smallskip

\noindent The $E(X_w^P)_v$ functions satisfy:
\begin{enumerate}
\item \label{itemGKM} (GKM axiom) Let $\alpha:\T\to \C^*$ be a root of $G$ (not necessarily simple). If $v_1=v_2 s_\alpha$ then $$\big(E(X_w^P)_{v_1}-E(X^P_w)_{v_2}\big){}_{|\ker(\alpha)\times (\t^*)^{W_P}}=0\,.$$ 
Here the restriction of the elliptic class is considered as a function on $\t\times (\t^*)^{W_P}$.
\item \label{support} (support axiom) In the appropriate sense, the class $E(X_w^P)$ is supported on the union of the conormal spaces of Schubert cells $X^P_v$ for $v\leq w$. To make sense of this condition, first one needs to interpret $E(X_w^P)$ as an element of the $K$-theory or elliptic cohomology of the cotangent bundle of $G/P$ (using $h$ as the first Chern class of an extra $\C^*$ action scaling the fibers). Then, the support condition means that the class $E(X_w^P)$ restricted to the complement of the named union is 0. A more practical interpretation (which can be phrased without involving the cotangent bundle) is that the local classes $E(X_w^P)_v$ satisfy certain divisibility properties. (For an argument reducing the support condition to a set of divisibility conditions see \cite[Proof of Thm 5.1]{RTV}.) 
\item \label{normalization} (normalization axiom)  The `diagonal' local classes are $E(X^P_\om)_\om=\prod\delta(\chi,h)$ for the weights $\chi$ of $T_\om (X^P_\om)$.
\end{enumerate}
The axiomatic characterization theorem for the $E(X^P_w)$ classes states that if a collection of functions $f_{w,v}$ satisfy the three listed conditions, as well as the transformation property of $f_{w,v}$ are the same as those of $E(X^P_w)_v$, then $f_{w,v}=E(X^P_w)_v$. For analogous arguments see \cite[3.3.5]{AO}, \cite[Sec.7.8]{RTV}.

\bigskip

\end{document}